\def\eps{\varepsilon}
\def\d{{\rm d}}
\def\R {\mathbb{R}}
\def\e {\varepsilon}
\def\z{\zeta}
\def\O{\Omega}
\def\bp{p}
\def\bbQ{\mathbb{Q}}
\def\bbY{\mathbb{Y}}
\newcommand{\FF}{{\mathcal F}}
\newcommand{\HH}{{\mathcal H}}
\newtheorem{proposition}{Proposition}[section]
\newtheorem{theorem}[proposition]{Theorem}
\newtheorem{lemma}[proposition]{Lemma}
\theoremstyle{definition}
\newtheorem{definition}[proposition]{Definition}
\newtheorem{remark}[proposition]{Remark}
\numberwithin{equation}{section}
\newcommand{\UUU}{\color{black}}
\newcommand{\EEE}{\color{black}}
\newcommand{\Nz}{{\mathbb N}}
\newcommand{\Rz}{{\mathbb R}}
\newcommand{\per}{\mathrm{Per}}
\newcommand{\cof}{\mathrm{cof\,}}
\newcommand{\MK}{\color{black}}
\newcommand{\EM}{\color{black}}
\newcommand{\KKK}{\color{black}}
\newcommand{\EOR}{\color{black}}
\newcommand{\md}{{\rm d}}
\newenvironment{proofth0}{\removelastskip\par\medskip   
\noindent{\bf Proof of {\rm {\bf Theorem \ref{Prop:diffuse-ex}}}.}
\rm}{\penalty-20\null\hfill$\square$\par\medbreak} 
\newenvironment{proofth1}{\removelastskip\par\medskip   
\noindent{\bf Proof of {\rm {\bf Theorem \ref{th1}}}.}
\rm}{\penalty-20\null\hfill$\square$\par\medbreak} 
\newenvironment{proofth2}{\removelastskip\par\medskip   
\noindent{\bf Proof of {\rm {\bf Theorem \ref{th2}}}.}
\rm}{\penalty-20\null\hfill$\square$\par\medbreak} 
\def \no#1#2#3 {{\bf #1} (#3), #2.}
\def \eds#1#2#3 {#1, #2, #3.}
\title[Equilibrium for multiphase solids with Eulerian
interfaces]
{Equilibrium for multiphase solids \\ with Eulerian
interfaces }
\author{Diego Grandi}
\address[Diego Grandi]{Dipartimento di Matematica e Informatica, 
  Universit\`a  degli Studi di Ferrara,
Via Machiavelli 30, 44121 - Ferrara, Italy.}
\email{diego.grandi@unife.it}
\author{Martin Kru\v{z}\'ik}
\address[Martin Kru\v{z}\'ik]{
Czech Academy of Sciences, Institute of Information Theory and Automation,
Pod vod\' arenskou ve\v z\' \i\ 4, 182 08, Prague 8, Czech Republic and Faculty of Civil Engineering, Czech Technical University, Th\'{a}kurova 7, 166 29, Prague 6, Czech Republic.}
\email{kruzik@utia.cas.cz}
\author{Edoardo Mainini}
\address[Edoardo Mainini]{Dipartimento di Ingegneria meccanica, energetica, gestionale e dei trasporti, 
  Universit\`a  degli studi di Genova, Via all'Opera Pia, 15 - 16145 Genova Italy.}
\email{mainini@dime.unige.it}
\author{Ulisse Stefanelli}
\address[Ulisse Stefanelli]{Faculty of Mathematics, University of
  Vienna, Oskar-Morgenstern-Platz 1, A-1090 Vienna, Austria,
Vienna Research Platform on Accelerating
  Photoreaction Discovery, University of Vienna, W\"ahringerstra\ss e 17, 1090 Wien, Austria,
 \& Istituto di
  Matematica Applicata e Tecnologie Informatiche {\it E. Magenes}, via
  Ferrata 1, I-27100 Pavia, Italy.
}
\email{ulisse.stefanelli@univie.ac.at}
\subjclass[2010]{\UUU 74G25, 
49J45,
}
\keywords{Elasticity, Eulerian-Lagrangian description, Phase transition, Variational methods, Gamma-convergence.}
\begin{document}
\begin{abstract}
\EM
We describe  a general phase-field model for hyperelastic multiphase materials. 
The model features an elastic energy functional that depends on the phase-field variable and a surface energy term that depends in turn on the elastic deformation, as it measures interfaces in the deformed configuration.
We prove existence of energy minimizing equilibrium states and $\Gamma$-convergence of diffuse-interface approximations to the 
sharp-interface limit. 

\EOR


\end{abstract}
\maketitle
%
%
%
\section{Introduction}
%
\MK Mathematical models of multi-component (or multi-phase)  materials
have \UUU attracted the \EEE attention of researchers for decades.  A
prominent example \UUU of multi-phase materials is provided by \EEE  shape memory
alloys, i.e., intermetallic materials having a high-temperature phase
called austenite and a low-temperature phase called martensite, \UUU
existing \EEE in many symmetry-related variants,  see \cite{ball-james,bhattacharya}. Mathematical analysis of elastostatic problems of such materials is involved because of the lack of suitable convexity properties. In fact, these materials exhibit complicated microstructures which are reflected in faster and faster oscillations of minimizing sequences driving the elastic energy functional to its infimum. Consequently, no minimizer generically exists and various methods have been developed to 
cope with this difficulty.  

\UUU A possibility to overcome the nonexistence issue   \EEE is to search for a lower
semicontinuous envelope of the energy functional that describes
macroscopic behaviour of the specimen \cite{dacorogna}. \UUU This \EEE
provides us with a \UUU solvable minimization \EEE problem and \UUU
ensures that \EEE every minimizer is reachable by a minimizing
sequence of the original problem. 
The downside of this method, called {\it relaxation}, is that \UUU
such envelope is usually not known \EEE closed form. 

\UUU A second \EEE option is to include a  higher-order deformation
gradient to the energy functional. In this case, we resort to
nonsimple materials, see e.g. \cite{ball-crooks, BCO, ball-mora, KM, Mu, Mu2} for
various attempts in this direction. Here, a convex function of the
second deformation gradient (strain gradient) penalizes spatial
changes of the first gradient, which \UUU introduces a second length
scale in the model and \EEE implies that oscillations in minimizing sequences have finite fineness.  
\EM Besides, some models that are discussed in the above contributions include  surface terms along the  discontinuity set of the first deformation gradient, see also \cite{F, P}. 	\KKK

A third option is the phase-field approach to multiphase materials, in which each phase of the material is identified by some  value of a suitable phase indicator. A surface energy is generally assigned to each phase-separating interface, 
which prevents
 repeated phase jumps at small scale, see for instance the general theory by \v{S}ilhav\'{y}  \cite{S2,S3}. 
In the gradient theory of phase transitions,  the surface area penalization  is relaxed by assuming that the change of phase takes place in a small but finite layer. This is the typical approach to the theory of Cahn-Hilliard fluids
 \cite{B, G, M, S},
the fundamental convergence result to the sharp interface limit \UUU
being \EEE established in \cite{M} \UUU based on \EEE the Modica-Mortola Theorem \cite{MM}. 

\UUU In this paper, we consider an elastic model for multi-phase
materials   inspired by \cite{S2,S3}. \UUU We introduce an \EEE energy functional depending on the first deformation gradient and a phase indicator
distinguishing particular material phases or variants. In particular, to each pair of continuous phases we
associate an interfacial energy, where interfaces are measured in the
deformed configuration. 
 \UUU  In fact, variational theories featuring Eulerian interfacial energy
terms can be traced back at least to
\cite{Gurtin75} and have been considered, for instance, in
\cite{Javili,Levitas10,Levitas14,Levitas14b}, among others.\EEE


In the particular case of a \UUU two-component \EEE material, a
diffuse-interface approximation to the \v{S}ilhav\'{y}'s model was \UUU
discussed \EEE in \cite{GKMS}. \UUU There, we \EEE proved that the
approximations $\Gamma$-converge to the sharp-interface  model. 
\EM
The aim of this paper is to extend that theory in several ways. We shall introduce a more general model,
allowing for a finite number of material components. For the treatment of this model, we shall further develop the analysis of {\it interfacial measures} from \cite{GKMS}, where a key role is played by the notion of mappings  of finite distorsion \cite{HK}. We shall also consider the borderline case of deformations in $W^{1,p}(\O;\R^n)$ with $p=n$,   hence without  requiring their H\"{o}lder continuity. 

%

\EEE
Let $\Omega\subset\mathbb{R}^n$, $n\ge 2$, be an open domain
representing the reference configuration of a multi-component
material. 
The composition of the material at each point is described by a \emph{component vector }   $z(x)=(z_1(x),\ldots, z_h(x))\in \mathbb R^h$. For instance, a mixture of $h$ chemical species can be described by the relative mass fraction $z_i\in [0,1]$,   of the $i^{\textrm{th}}$ component of the mixture for \MK $i=1,\ldots, h$.  If the components are immiscible, then at each point $x\in\O$ we have  $z_i(x)\in\{0,1\}$ and $z_i(x)=1$ if and only if  the material component $i$ is present at $x$. \EOR As a second example, we can  mention ferromagnetic materials, in which the spontaneous magnetization vector $z(x)\in\mathbb R^3$ can serve  as the component descriptor of the phase.

We introduce the discrete set  $P=\{p_{\alpha}\in \mathbb R^h\; |\;\alpha=1,\ldots,m \} $,  $m\ge 2$,  of  \emph{stable phases} characterized by the component vectors $z=p_\alpha$. The relation between the components number $h$ and the number of stable phases $m$ depends on the specific model. For instance, in an immiscible mixture with $h$ components, 
we may have  $m=h$ and  \EM    $ (p_{\alpha})_i=\delta_{i\alpha}$, where  $(p_\alpha)_i$ is  the $i^{\textrm{th}}$ component   of $p_\alpha$. On the other hand, \EOR if the component vector $z\in\mathbb R^3$ represents the (saturated) magnetization  vector of an anisotropic magnetic crystal, for instance with cubic anisotropy, one  needs to consider $m=2h=6$ stable phases corresponding to the six magnetization directions $\pm (1,0,0),\;\pm (0,1,0),\;\pm (0,0,1)$.

\textbf{Sharp interface model}.
\UUU In the sharp-interface setting, given a \EEE component-configuration field   $z:\O\to\mathbb R^h $ taking
values in $P$, \EEE  we  let $E_\alpha(z)\EOR:=\{x\in\O:z(x)=p_\alpha\}$, $\alpha=1,\ldots, m$. The sets  $(E_\alpha)_\alpha$  form a partition of  $\O$ describing  the spatial distribution of phases.
For a given deformation   $y:\Omega\to\O^y\subset \mathbb{R}^n$, we let $\zeta:\O^y\to \R^h$ denote the associated indicator function in the deformed configuration, i.e., $\zeta_i:=z_i\circ y^{-1}$, $i=1,\ldots, h$. 
 The set $E_\alpha^y:=y(E_\alpha)$ is the region occupied by phase $\alpha$ in the deformed configuration.

We  consider the stored energy functional for an elastic multiphase material
\begin{align}\label{F0}
\mathcal{F}_0(\zeta,y)=\int_\Omega W(\nabla y(x),\zeta(y(x)))\EOR\,dx+  \frac12\sum_{\alpha,\beta=1}^m d_{\alpha,\beta}\mathcal H^{n-1}(E_{\alpha,\beta}^y)
\end{align}
where $W$ is the stored bulk energy and 
\[
E^y_{\alpha,\beta}:=\partial^* E^y_\alpha\cap \partial^* E^y_\beta\cap \O^y
\]
 is the interface between $E_\alpha$ and $E_\beta$ in the deformed configuration.  Here, $\partial^*$ denotes the reduced boundary.  The coefficients $d_{\alpha,\beta}$ are suitable surface-tension parameters such that:  $d_{\alpha,\beta}=d_{\beta,\alpha}\ge 0$ and $d_{\alpha,\beta}=0$ if and only if $\alpha=\beta$.  The coefficients  are assumed to satisfy the following inequalities
\begin{align}\label{eq:tr-inequality}
 d_{\alpha,\beta}+d_{\beta,\gamma}\ge d_{\alpha,\gamma}
\end{align}
for any admissible triple of indexes $\alpha,\beta,\gamma$. 
This condition is necessary for lower semicontinuity of $\mathcal{F}_0$, see \cite{AB}. Indeed,  assume that  $d_{\alpha,\gamma}>d_{\alpha,\beta}+d_{\beta,\gamma}$ for some triple of phases and consider a sequence of  states where a layer of the phase $\beta$, of  thickness tending to zero, is inserted between the  layers $\alpha$ and $\gamma$. The bulk contribution in \eqref{F0} tends to the  value taken  in absence of the phase $\beta$ (the limit state); instead, the interfacial energy undergoes an increasing jump discontinuity in the limit process.    
The meaning of \eqref{eq:tr-inequality} also resides  in its relation with the notion of {\it separability} of interfaces from \cite{S2}, which would require the existence of coefficients $g_{\alpha}$, $\alpha=1,\ldots,m$, such that $d_{\alpha,\beta}=g_\alpha+g_\beta$ for any $\alpha$ and any $\beta$ between $1$ and $m$. The separability assumption implies \eqref{eq:tr-inequality} and the two are equivalent if $m=2,3$.    
\EOR


%
%

This model, as in \cite{B}, features a standard \EM sharp \KKK interface term for a multiphase material. On the other hand, the interface penalization is complemented by an elastic energy term that accounts for macroscopic deformation of the specimen, and the choice of taking the interface term in the deformed configuration is an example of interface polyconvex energy as described by \cite{S2,S3}.  

\textbf{Diffuse-interface model}.
 We are interested in providing a diffuse-interface approximation of the above energy. In a diffuse-interface model, 
 the phase field $z$   takes    values in \UUU $\mathbb R^h$. \EEE 
 The  phase-field   functional is  defined  as
 \[
  \mathcal{F}_{\eps}(\zeta,y)= \mathcal{F}^{\rm bulk}(\zeta,y)+  \mathcal{F}_{\eps}^{\rm int}(\zeta,y),
 \]
 where 
 \[
 \mathcal{F}^{\rm bulk}(\zeta,y):= \int_\Omega W(\nabla y(x),\zeta(y(x)))\,dx,\quad  \mathcal{F}_{\eps}^{\rm int}(\zeta,y):=\int_{\Omega
 ^y}\frac\eps2 |\nabla \zeta(\xi)|^2+\frac1\eps\,\Phi(\zeta(\xi))\,d\xi,
 \]
and where we denote by   $\xi$ (here and through the paper) the  variable in  deformed configuration,  i.e., $\xi\in \O^y$. 
 We have introduced a  continuous  multi-well potential $\Phi:\mathbb{R}^h\to\mathbb{R}^+$ with zeros only at $p_1,\ldots p_m$.
 The relationship between the two models is established by letting  \begin{equation}\label{dcoeff}d_{\alpha,\beta}:=d_\Phi(p_\alpha,p_\beta),\qquad \alpha=1,\ldots, m,\quad\beta=1,\ldots, m,\end{equation} where $d_\Phi$ the Riemaniann distance in $\mathbb{R}^h$ induced by $\sqrt{2\Phi}$, i.e.,
 $$ d_\Phi(p_\alpha,p_\beta):=\inf\left\{\int_0^1\sqrt{2\Phi(\gamma(t))}|\gamma'(t)|\,dt: \gamma\in C^1([0,1]; \mathbb R^h),\;\gamma(0)=p_\alpha,\;\gamma(1)=p_\beta\right\}.$$ 
 This   guarantees   symmetry, positivity, and \UUU the validity of \EEE the triangle inequality  \eqref{eq:tr-inequality} for the coefficients $d_{\alpha,\beta}$.

\MK
\subsection*{Plan of the paper}
 We first state our main results in
 Section~\ref{sec:mainresults}. \UUU In particular, we address \EEE
 the existence of minimizers for the diffuse-interface, as well as \UUU
 for \EEE the sharp-interface functionals $\mathcal{F}_{\eps}$ and
 $\mathcal{F}_{0}$ in Theorems~\ref{Prop:diffuse-ex} and \ref{th1},
 \UUU respectively. \EEE The approximation result is stated in
 Theorem~\ref{th2}. Properties of admissible deformations \EEE
 are reviewed in Section~\ref{sec:basics} \UUU whereas \EEE properties
 of interfacial measures, implicitly introduced in \cite{S2,S3},
 are \UUU detailed \EEE in Section~\ref{sil2}. 
 \UUU Our
 results mainly rest on proving a $\Gamma$-convergence
 statement. Indeed, a \EEE  proof of the fact that  $\mathcal{F}_{0}$
 is a lower bound for $\mathcal{F}_{\eps}$  is contained in
 Section~\ref{sec:liminf}. \UUU Eventually, \EEE proofs of the main theorems can be found in Section~\ref{sec:main}.

 \section{Main results}\label{sec:mainresults}
 \EM
 Let $\Omega\subset\mathbb R^n$, $n\ge 2$, be a bounded open Lipschitz set representing the reference configuration. In this section, we introduce the set of admissible couples $(y,\zeta)$ (deformation and phase indicator)  and we state the main results. \KKK
 
 \subsection{Admissible states} Following \cite{GKMS},
 we introduce the functional spaces of the admissible states.  For fixed  $q>n-1$ and $p\ge n$ (\EM not included in the notation for simplicity\KKK), we define the space of admissible deformations as \EOR
 \begin{equation} \label{eq:Y}
 \begin{aligned}
 \bbY:=\left\{y \in W^{1,p}(\O;\mathbb R^n)\ | \ \det\nabla y>0 \text{ a.e.}\, , 
 \int_\O\det\nabla y(x)\,\md x\le |\O^y|,\, 
 K_y\in L^q(\Omega)
 \right\}
 \end{aligned}
 \end{equation}
   Here, $K_y$ denotes the optimal distorsion function associated to the deformation map $y$, see Definition \ref{finitedistorsion} below.  Any element of $\mathbb Y$ has  a continuous representative  which is a homeomorphism.  This is a consequence of the Ciarlet-Ne\v{c}as \cite{ciarlet-necas} condition appearing in \eqref{eq:Y} and of the $L^q$ integrability of the distorsion function as shown in 
   \cite{GKMS} for $n=3$. \EOR The arguments therein straightforwardly apply for any dimension $n\ge 2$. Later in section \ref{sec:basics} we shall derive more properties of the set of admissible deformations.
 
 Recalling that $P\subset \R^h$ is the finite set of stable phases, \KKK
 we define the sets of the states, including the \EM states \KKK for the sharp interface model 
  $$
 \mathbb{Q}:= 
 \{(y,\zeta)\ | \ y\in \mathbb{Y}, \ \zeta\in BV(\Omega^y;\R^h), \ \z(\xi)\in P \ \mbox{for a.e. } \xi\in\O^y\},
 $$ 
 \KKK and  for the diffuse interface model 
  $$
 \widetilde{\mathbb{Q}}^R:= 
 \{(y,\zeta)\ | \ y\in \mathbb{Y}, \ \zeta\in W^{1,2}(\Omega^y;\R^h), \ |\z(\xi)|\le R \ \mbox{for a.e. } \xi\in\O^y\},
 $$ 
 where $R>0$. 
 A natural compatibility condition for the two models is  $R>\max_{\alpha\in\{1,\ldots, m\}}|p_\alpha|$,
 \EM so that for a couple $(y,\zeta)\in\widetilde {\mathbb Q}^R$, $\zeta$ may take values in $P$.   
 
 %
 Letting
 $\Gamma_0\subset\partial\O$ be  relatively open in $\partial \O$
 with 
 $\HH^{n-1}(\Gamma_0)>0$,  and letting $y_0\in\mathbb Y$ be continuous up to $\partial\Omega$, \EOR
 we introduce the associated function spaces with  Dirichlet boundary conditions 
 $${{\mathbb{Q}}}_{(y_0,\Gamma_0)}:= \{(y,\z) \in
 \mathbb{Q} \ | \
 y=y_0\text{ on $\Gamma_0$}\}, \qquad {\widetilde{\mathbb{Q}}}^R_{(y_0,\Gamma_0)}:= \{(y,\z)\in \widetilde{\mathbb{Q}} \ | \,
	y=y_0\text{ on $\Gamma_0$}\},$$
\EM where the relation $y=y_0$ on $\Gamma_0$ is understood in the sense of traces. Moreover,  $y_0$  is required to be nonconstant on $\Gamma_0$ (i.e. $\Gamma_0$ does not shrink to a point).  \KKK
 We further define $\mathbb Q_{y_0}:=\mathbb Q_{(y_0,\partial\O)}$ and $\widetilde{\mathbb Q}^R_{y_0}:=\widetilde{\mathbb Q}^R_{(y_0,\partial\O)}$. \KKK

 Given $y_0\in \mathbb Y$, the compatibility between the boundary condition $y=y_0$ on $\Gamma_0$ and the choice of the energy functional is enforced by assuming that 
 \begin{equation}\label{statofinito}
 \mbox{there exists} \ \ (y,\z)\in\bbQ_{(y_0,\Gamma_0)} \ \ \text{such that} \ \ \mathcal{F}_{0}(y,\z)<\infty.
 \end{equation}
 

\EOR
\subsection{The elastic energy}
The elastic energy, both \UUU in the diffuse- and in the sharp-interface
case, is given by \EEE the bulk integral functional
$
\mathcal{F}^{ \mathrm{bulk}}(\zeta,y)
$.
The following assumptions are made for the energy density \EM $W:\mathbb R^{n\times n}\times\mathbb R\to(-\infty,+\infty]$.  \KKK
\begin{equation}
\begin{aligned}\label{lscassumption}
&\text{The map $ W(\cdot,\cdot)$ is lower semicontinuous in $
  \mathbb{R}^{n\times n}\times \mathbb R^h$},\\
&\mbox{for any  $z\in \mathbb R^h$, \KKK the map $F\mapsto W(F, z) $ is {\it polyconvex}, }\\
& W(RF,z) = W(F,z)\quad \forall R\in{\rm SO}(n),\;\;\forall F  \in \R^{n\times n},\;\; \forall z\in\mathbb R^h,  
 \end{aligned}
 \end{equation}
\EM where ${\rm SO}(n)$ appearing in the standard frame-indifference property is the special orthogonal group, i.e., ${\rm
 	SO}(n)=\{R\in  \R^{n\times n} \ | \ R R^T =  I, \ \det
 R=1\}$. \KKK
 The notion of 
 { polyconvexity} \cite{Ball-1977} requires that the map $F\mapsto W(F, z) $
  can be written as a convex function of all of the minors (subdeterminants) of $F$. For instance, if $n=3$,
 \begin{align*}  %
 W(F,z ):=
 \begin{cases}
 w(F, \cof F, \det F, z) & \mbox{ if } \det F >0, \\
 \infty \mbox{ otherwise}
 \end{cases}
 \end{align*}
 for a convex function $w(\cdot,z):\R^{19} \to \R$, at \UUU all \EEE
 $z\in \mathbb R^h$, \KKK where $\cof F$ denotes the cofactor matrix
 of $F$. \EM We further assume that $W(\cdot, z)$ satisfies a suitable coercivity property. \KKK More precisely, we require that  
  there exists \EOR  $C>0$, \UUU $ p\ge n$, $ r>1$, and $ q>n-1$ \EEE such that
 \begin{align}
 & \UUU W(F,z)\ge C\left(|F |^p+(\det F)^r +\frac{|F |^{nq}}{(\det F
   )^{q}}  \right)-\frac{1}{C} \qquad \forall F  \in \R^{n \times n}, \EM \;\;\forall z\in\mathbb R^h. \label{W-ass2} 
 \end{align}
  The third term on the right-hand side of \eqref{W-ass2} ensures that
 deformation  gradients  $F=\nabla y$ with finite energy  will
 have a $q$-integrable distorsion function  $F\mapsto
 |F|^n/\det F$. Notice that $F\mapsto|F|^n/\det F$ is polyconvex on the
 set of matrices with positive determinant. \EM On the other hand, we mention that it is possible to drop the restriction $W(F,z)\ge C(\det F)^r$ in case $p>n$. \KKK
  
 A typical example of a bulk energy functional $W$ is 
 \begin{equation}\label{example}
 W(F,z)=\EM \sum_{i=1}^{h}  z_i^+ W_i(F) +
 \Big(1-(z_1 + \dots +z_{h})\Big)^+ W_{h+1}(F)\EEE 
 \end{equation}
 where we assume that  the listed properties \eqref{lscassumption}-\eqref{W-ass2} are \UUU
 uniformly \EEE satisfied by every elastic potential $W_i$ at the place of $W$. \UUU The
 latter corresponds to a mixture ansatz, where notation is
 prepared for the general case $z\in \mathbb R^h$ of the phase-field
 approximation.  In the sharp-interface case, we have that $z\in P$, \EM where the set $P$ of stable phases includes the origin and the standard orthonormal basis of $\mathbb R^h$ (thus, $m=h+1$)   \KKK and
 the
latter elastic energy takes the classical form
\[
 W(F,z)=\EM \sum_{i=1}^{h}  z_i W_i(F)+(1-(z_1 + \dots +z_{h}))\, W_{h+1}(F),\qquad  z\in P.
\]
We also note that the assumptions
\eqref{lscassumption}-\eqref{W-ass2} on $W$ could be imposed in the
physical case  $z\in {\rm Conv}
(P)$ first, and then extended to the whole $\mathbb R^h$ by a suitable
projection construction. \EEE

\subsection{Statement of the main results}
\UUU Owing to the above-introduced notation, we are now in the
position of stating the main results of the paper. \EOR
\EM In the next three statements, the following  underlying assumptions are understood to hold. $\Omega$ is a bounded open Lipschitz domain.  The exponents $p,q$ in the definition of the set of admissible deformations $\mathbb Y$ are of course given by assumption \eqref{W-ass2}. 
As discussed in the introduction, 
the multiwell potential $\Phi:\mathbb R^h\to\mathbb R^+$ is continuous and vanishing only at points of $P$, and the coefficients $d_{\alpha,\beta}$ appearing in \eqref{F0} are given by \eqref{dcoeff}. About the Dirichlet datum, we require that $y_0\in\mathbb Y$ is continuous up to the boundary of $\Omega$ and not constant on  $\Gamma_0$. Here, $\Gamma_0\subset\partial\O$ is  relatively open in $\partial \O$
 and
 $\HH^{n-1}(\Gamma_0)>0$.

\KKK
\begin{theorem}[\UUU Existence for the \EM diffuse-interface model\KKK]\label{Prop:diffuse-ex}
	 Let $\eps>0$  and $R>0$  be fixed. Suppose that  $(y,\zeta)\in\widetilde{\mathbb Q}^R_{(y_0,\Gamma_0)}$  exists such that $\FF_\eps(y,\zeta)<\infty$. \EOR
	 Let assumptions \eqref{lscassumption}, \eqref{W-ass2} hold.  Then,  there is a minimizer of $\FF_\eps$ on ${\widetilde{\mathbb{Q}}}^R_{(y_0,\Gamma_0)}$.
\end{theorem}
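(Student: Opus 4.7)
The plan is to apply the direct method of the calculus of variations. By hypothesis, $\FF_\eps$ admits at least one competitor with finite energy, so $\mu:=\inf\FF_\eps$ over $\widetilde{\mathbb Q}^R_{(y_0,\Gamma_0)}$ is finite; fix a minimizing sequence $(y_k,\zeta_k)$. The coercivity in \eqref{W-ass2} together with the Dirichlet trace on $\Gamma_0$ (and a Poincaré-type estimate) yields uniform bounds on $\|y_k\|_{W^{1,p}(\Omega;\R^n)}$, $\|\det\nabla y_k\|_{L^r(\Omega)}$, and $\|K_{y_k}\|_{L^q(\Omega)}$, while the interface term (for fixed $\eps$) bounds $\|\nabla\zeta_k\|_{L^2(\Omega^{y_k})}$ and the constraint $|\zeta_k|\le R$ provides an $L^\infty$ bound.

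Passing to a non-relabelled subsequence, $y_k\wto y$ in $W^{1,p}$, $\cof\nabla y_k\wto\cof\nabla y$, and $\det\nabla y_k\wto\det\nabla y$ in the appropriate Lebesgue spaces by the standard weak continuity of the minors in the polyconvex setting. Using closedness properties of $\mathbb Y$ developed in Section~\ref{sec:basics} — passage to the limit in the Ciarlet–Nečas condition and in the $L^q$ distortion bound — one deduces $y\in\mathbb Y$, with the trace $y=y_0$ on $\Gamma_0$ preserved. The distortion integrability furnishes a uniform modulus of continuity for the $y_k$, so that $y_k\to y$ uniformly on $\overline\Omega$ by Arzelà–Ascoli; in particular, $\Omega^{y_k}$ and $\Omega^y$ lie in a common bounded ball $B$, for every compact $K\subset\Omega^y$ one has $K\subset\Omega^{y_k}$ for $k$ large, and $y_k^{-1}\to y^{-1}$ uniformly on $K$.

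The uniform bound on $\zeta_k$ in $W^{1,2}(K)\cap L^\infty(K)$ on every such compact $K$ allows, through a diagonal extraction, to produce $\zeta\in W^{1,2}_{\rm loc}(\Omega^y;\R^h)$ with $|\zeta|\le R$ such that $\zeta_k\wto\zeta$ in $W^{1,2}(K)$ and strongly in $L^2(K)$ for every compact $K\subset\Omega^y$. Combined with the uniform convergence $y_k\to y$ and the change-of-variables formula (the $L^\infty$ bound on $\zeta$ handling the complement of any fixed $K$), one obtains $\zeta_k\circ y_k\to\zeta\circ y$ strongly in $L^p(\Omega;\R^h)$.

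Lower semicontinuity of $\FF^{\rm bulk}$ then follows from the polyconvexity and lower semicontinuity of $W$ expressed in \eqref{lscassumption}, combined with the weak convergence of $(\nabla y_k,\cof\nabla y_k,\det\nabla y_k)$ and the strong $L^p$ convergence of $\zeta_k\circ y_k$ (a standard result for polyconvex integrands with a continuously coupled parameter). For $\FF_\eps^{\rm int}$, the potential part is handled by Fatou and the Dirichlet part by weak lower semicontinuity of $\int|\nabla\cdot|^2$, both applied on every compact $K\subset\Omega^y$ and then followed by an exhaustion argument to recover integration over $\Omega^y$. The limit pair $(y,\zeta)$ lies in $\widetilde{\mathbb Q}^R_{(y_0,\Gamma_0)}$ and satisfies $\FF_\eps(y,\zeta)\le\mu$, hence minimizes $\FF_\eps$. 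The main obstacle is the interplay between the elastic deformation and the phase field on the moving Eulerian domain $\Omega^{y_k}$ — in particular, producing a global limit $\zeta$ on $\Omega^y$ and passing to the $\liminf$ in the interface integral without a uniformly Lipschitz boundary on $\Omega^{y_k}$; this is resolved by the compact exhaustion of $\Omega^y$, which itself rests on the uniform convergence $y_k\to y$ granted by the $L^q$ distortion integrability embedded in the definition of $\mathbb Y$.
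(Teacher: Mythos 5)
Your proposal is correct in its overall scheme (direct method: coercivity, compactness, lower semicontinuity), but it handles the moving Eulerian domain by a different device than the paper does, so it is worth contrasting the two. You exhaust $\Omega^y$ by compacts $K\subset\subset\Omega^y$, use $K\subset\Omega^{y_k}$ for large $k$ to extract a diagonal weak limit $\zeta$, and then recover the interface $\liminf$ by working on each $K$ and letting $K\nearrow\Omega^y$. The paper instead extends $\zeta_k$ and $\nabla\zeta_k$ by zero to all of $\mathbb R^n$, extracts a weak-$*$ limit $\eta$ in $L^\infty(\mathbb R^n)$ and a weak limit $H$ in $L^2(\mathbb R^n)$, identifies $H=\nabla\eta$ locally, and applies the lower semicontinuity of the $L^2$ norm directly on $\mathbb R^n$; the compact exhaustion then enters only to identify the limits and to invoke Lemma~\ref{equicontinuouslemma}. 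Both routes are sound; your exhaustion argument avoids the extension but requires a final monotone-convergence step to pass from $\zeta\in W^{1,2}_{\rm loc}$ to $\zeta\in W^{1,2}(\Omega^y)$, which you leave implicit.

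Two of your technical claims are overstated and should be repaired, though neither is fatal. First, the distortion integrability together with \cite[Theorem~1.3]{HeiK} yields an equicontinuity estimate only on compact subsets of $\Omega$; the constant blows up near $\partial\Omega$, so "$y_k\to y$ uniformly on $\overline\Omega$ by Arzel\`a--Ascoli" is not justified. What you actually have, and what you actually use (inclusion of compacts $K\subset\Omega^y$ into $\Omega^{y_k}$ for large $k$, local uniform convergence of $y_k^{-1}$), is locally uniform convergence on $\Omega$, which is exactly the content of Lemma~\ref{equicontinuouslemma}. Second, the step "$\zeta_k\circ y_k\to\zeta\circ y$ strongly in $L^p(\Omega;\mathbb R^h)$" is glossed over: you must control the mismatch of sets $y^{-1}(K)$ versus $y_k^{-1}(K)$, and the decisive ingredient there is the equi-integrability of $\det\nabla y_k^{-1}$, which comes from the uniform $L^q$ bound on the distortion (see \cite[Lemma~5.1]{GKMS} and Lemma~\ref{equilemma}). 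You invoke only the $L^\infty$ bound and "change of variables," which is not enough; this is precisely what Lemma~\ref{lem:1} supplies, and should be cited or reproduced. With those two points fixed, your argument carries through.
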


\begin{theorem}[\UUU Existence for the sharp-interface model]\label{th1} 
Under assumptions \eqref{statofinito}, \eqref{lscassumption},  \eqref{W-ass2},
	the functional $\mathcal{F}_0$ admits a minimizer on
	${{\mathbb{Q}}}_{(y_0,\Gamma_0)}$. 
\end{theorem}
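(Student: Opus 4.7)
The plan is to apply the direct method of the calculus of variations, carefully combining the theory of mappings of finite distortion with the interfacial-measure framework of Section~\ref{sil2}. Let $(y_k,\zeta_k)\subset \bbQ_{(y_0,\Gamma_0)}$ be a minimizing sequence, which exists and has bounded energy by the compatibility assumption \eqref{statofinito}. First, I would extract compactness for the deformations. The coercivity \eqref{W-ass2} yields uniform bounds on $\|\nabla y_k\|_{L^p(\Omega)}$, $\|\det\nabla y_k\|_{L^r(\Omega)}$, and $\|K_{y_k}\|_{L^q(\Omega)}$, and combined with the Dirichlet trace on $\Gamma_0$ and the Poincar\'e inequality, a bound on $\|y_k\|_{W^{1,p}(\Omega;\R^n)}$. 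Along a subsequence, $y_k\wto y$ in $W^{1,p}$, while polyconvexity and Ball's classical weak-continuity of minors give $\cof\nabla y_k\wto \cof\nabla y$ and $\det\nabla y_k\wto \det\nabla y$ in the appropriate $L^s$-spaces.

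Next, I would verify that the limit $y$ lies in $\bbY$. The Ciarlet--Ne\v{c}as condition passes to the limit by standard arguments, the positivity of $\det\nabla y$ a.e.\ follows from the coercivity of $(\det F)^r$ together with weak lower semicontinuity along minimizing sequences, and the $L^q$-bound on $K_y$ follows from weak lower semicontinuity of the polyconvex integrand $|F|^n/(\det F)$ on $\GLp$. The properties recalled in Section~\ref{sec:basics} then guarantee that, up to a subsequence, the continuous representatives $y_k$ converge uniformly on $\overline\Omega$ to a homeomorphism $y\in\bbY$ satisfying $y=y_0$ on $\Gamma_0$, with $\Omega^{y_k}\to\Omega^y$ in a suitable Hausdorff/Kuratowski sense and with uniformly converging inverses.

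For the phase variable, the surface penalization controls the perimeters of the sets $E^{y_k}_\alpha$ inside $\Omega^{y_k}$, because the triangle-inequality coefficients \eqref{eq:tr-inequality} are bounded below by a positive multiple of $\mathcal H^{n-1}$-measure of each reduced boundary in $\Omega^{y_k}$ (by the assumption $d_{\alpha,\beta}>0$ whenever $\alpha\neq\beta$). Since $\zeta_k\in P^{\text{a.e.}}$, pulling back through the homeomorphism $y_k$ gives a sequence of $BV$-partitions of $\Omega$ whose total boundary measure is uniformly controlled (via the distortion estimate, which links reference and deformed Hausdorff measures). A $BV$-compactness argument in the reference configuration, combined with the uniform convergence $y_k\to y$, then produces a limit phase field $\zeta\in BV(\Omega^y;\R^h)$ taking values in $P$, so that $(y,\zeta)\in\bbQ_{(y_0,\Gamma_0)}$.

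Finally, I would establish lower semicontinuity of $\mathcal F_0$. The bulk term is handled by the standard polyconvex lower-semicontinuity theorem once one has almost-everywhere convergence of $\zeta_k(y_k(\cdot))$ on $\Omega$, which follows from the $BV$-compactness together with the uniform convergence of $y_k$ and $y_k^{-1}$. The interfacial term is the delicate point: the interfaces $E^{y_k}_{\alpha,\beta}$ live in varying deformed domains $\Omega^{y_k}$, and both the surface-tension inequalities \eqref{eq:tr-inequality} and the framework of interfacial measures developed in Section~\ref{sil2} are needed to pass to the liminf. In fact, the same machinery used for the $\Gamma$-liminf of Section~\ref{sec:liminf} applies a fortiori here (since the sharp-interface energies are constant along the sequence rather than arising from a diffuse approximation), yielding
\[
\frac12\sum_{\alpha,\beta}d_{\alpha,\beta}\,\mathcal H^{n-1}(E^y_{\alpha,\beta})\le \liminf_{k\to\infty}\frac12\sum_{\alpha,\beta}d_{\alpha,\beta}\,\mathcal H^{n-1}(E^{y_k}_{\alpha,\beta}).
\]
The main obstacle is precisely this last step: transferring the $BV$-lower-semicontinuity of the multiphase perimeter from a fixed domain to the moving configurations $\Omega^{y_k}$, while respecting the triangle inequality \eqref{eq:tr-inequality}, and this is where the interfacial-measure representation from Section~\ref{sil2} is essential. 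Once this is in place, $(y,\zeta)$ is a minimizer and the proof is complete.
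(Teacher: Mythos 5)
Your overall direct-method scaffolding is sound, but there is a genuine gap in the compactness step for the phase variable, and your description of the lower-semicontinuity step does not match what is actually needed.

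The problematic step is your claim that "pulling back through the homeomorphism $y_k$ gives a sequence of $BV$-partitions of $\Omega$ whose total boundary measure is uniformly controlled (via the distortion estimate)." This is unjustified and, as stated, almost certainly false. The framework of Section~\ref{sil2} controls the total variation of the \emph{interfacial measure} $\bp_{y_k,g_k}=-\mathrm{div}(g_k\,\cof\nabla y_k)$, where $g_k=\chi_{E_\alpha^{y_k}}\circ y_k$, and Theorem~\ref{reverse} identifies $|\bp_{y_k,g_k}|(\O)$ with $\per(E_\alpha^{y_k},\O^{y_k})$. It does not give a bound on $|\nabla g_k|(\O)$, i.e.\ on the reference-configuration perimeter of $y_k^{-1}(E_\alpha^{y_k})$; the composition of a $BV$ characteristic function with a Sobolev homeomorphism of merely $L^q$-integrable distortion need not be $BV$, and no coarea/distortion estimate of the kind you invoke is available at this integrability level. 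The paper avoids this pitfall entirely: it performs $BV$-compactness \emph{in the deformed configuration}, on an exhausting family of compact sets $O^\ell\subset\subset\O^y$ (which are contained in $\O^{y_k}$ for $k$ large thanks to Lemma~\ref{equicontinuouslemma}), and extracts the limiting partition $(F^\alpha)_\alpha$ of $\O^y$ by a diagonal argument; Proposition~\ref{lsemicontinuity} is then used only to show the limit sets have finite perimeter.

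A secondary inaccuracy concerns the lower semicontinuity of the interfacial energy. You suggest that the $\Gamma$-$\liminf$ machinery of Section~\ref{sec:liminf} applies a fortiori. That machinery is built around Proposition~\ref{baldo2.1} (Modica--Mortola-type inequalities for gradient flows of $\Phi$) and is tailored to diffuse phase fields; in the sharp-interface minimizing sequence the $\zeta_k$ are already $P$-valued, so what is needed is the lower semicontinuity of the multiphase surface energy with coefficients $d_{\alpha,\beta}$ under local convergence in measure of partitions, which is exactly the Ambrosio--Braides result \cite[Example 2.5]{AB} that the paper invokes (and which is where the triangle inequality \eqref{eq:tr-inequality} actually enters). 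Your intuition about the role of interfacial measures and \eqref{eq:tr-inequality} is on the right track, but the two ingredients are used at different points than you describe, and the attempted pull-back to the reference domain does not work.
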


The third main result  states that \UUU the phase-field 
indeed approximates the sharp-interphase model, namely \EEE
$\mathcal{F}_0$ is \UUU the \EEE $\Gamma$-limit \cite{DalMaso93} of
the family $(\mathcal{F}_{\eps})_\eps$. It requires an  additional
assumption on the boundary datum  $y_0$. Namely,   we ask for \EEE
$\Gamma_0=\partial\Omega$, \UUU i.e., Dirichlet conditions are
imposed on the whole boundary, and $\O^{y_0}$ is assumed
to be  a Lipschitz domain. Moreover, assumptions on $W$ have to
be strengthened by additionally asking \EEE
\begin{equation}\label{W-ass1}
\mbox{  the map $W(F,\cdot):\mathbb R^h\to\mathbb R$ is continuous for any $F\in \mathbb R^{n\times n}$.
}
\end{equation}
 We shall also require that for any $R>0$, given $y\in\mathbb Y$ and given $z\in L^1(\O;\mathbb R^h)$ such that $|z|\le R$ a.e. in $\O$, there holds \begin{equation}\label{W-ass5}
 \int_\O W(y(x),z(x))\, dx <\infty\;\;\Rightarrow \;\; \int_\O \sup_{\{z\in\mathbb R^h: |z|\le R\}}W(y(x),z) \,dx <\infty. 
 \end{equation}\KKK
 \EM When considering the mixture example \eqref{example}, the latter assumption is satisfied under the following comparability condition between the elastic potentials of the different phases: if $y\in\mathbb Y$ is such that  $W_i(\nabla y)$ is integrable on $\Omega$ for some $i=1,\ldots,m$, then  $W_j(\nabla y)$ is integrable on $\Omega$ for any $j\neq i$. 
 \KKK

\begin{theorem}[Phase-field approximation]\label{th2}
Let assumptions \eqref{statofinito}, \eqref{lscassumption}, \eqref{W-ass2}, \eqref{W-ass1}  and \eqref{W-ass5} hold. 
	Let $y_0\in\mathbb Y$ be such that $\Omega^{y_0}$ is a Lipschitz domain. 
 There exists $R_0>0$ such that if $R>R_0$ the following holds. 	For every vanishing sequence  $(\eps_k)_k $ of positive numbers  and every sequence
	$(y_k,\zeta_k)_k$ of minimizers of $\mathcal{F}_{\eps_k}$  on 
	$\widetilde\bbQ_{y_0}^R$, there exists $(y, \z)\in \bbQ_{y_0}$ such
	that, up to  not relabeled subsequences, 
	\begin{itemize}
		\item[i)] $y_k\to y$ weakly in $W^{1,p}(\O;\R^n)$ as $k\to\infty$ 
		\item[ii)]  $\zeta_k \to{\zeta}$ strongly in ${L^1(\Omega^y; \R^h}) $ as $k\to\infty$
		\item[iii)] $(y,\z)$ minimizes $\mathcal{F}_0$ on $\bbQ_{y_0}$.
	\end{itemize}
\end{theorem}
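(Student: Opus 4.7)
The plan is to cast the statement as a consequence of $\Gamma$-convergence of $\mathcal{F}_{\eps_k}$ to $\mathcal{F}_0$ together with equicoercivity, so that minimizers of the approximate problems accumulate at minimizers of the limit problem. The three ingredients needed are: (a) compactness of sequences with bounded $\mathcal{F}_{\eps_k}$-energy; (b) the liminf inequality $\mathcal{F}_0(y,\zeta)\le \liminf_k \mathcal{F}_{\eps_k}(y_k,\zeta_k)$ along convergent sequences, which is precisely what Section~\ref{sec:liminf} supplies; and (c) a recovery sequence realizing the limsup inequality for every candidate in $\mathbb{Q}_{y_0}$. Granted (a)--(c), the three conclusions i)--iii) follow by the standard $\Gamma$-convergence argument: extract a limit $(y,\zeta)$ from $(y_k,\zeta_k)$, combine (a) and (b) to get $\mathcal{F}_0(y,\zeta)\le\liminf_k\mathcal{F}_{\eps_k}(y_k,\zeta_k)$, and then compare with any $(\tilde y,\tilde\zeta)\in\mathbb{Q}_{y_0}$ through the recovery sequence in (c) and the minimality of $(y_k,\zeta_k)$ in $\widetilde{\mathbb{Q}}^R_{y_0}$, which requires the threshold $R\ge R_0$ identified below.

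For the compactness step, I would first exploit hypothesis~\eqref{statofinito}: the finite-energy sharp-interface competitor can be diffused through the very same Modica--Mortola construction used in (c), producing an element of $\widetilde{\mathbb{Q}}^R_{y_0}$ along which $\mathcal{F}_{\eps_k}$ stays bounded; by minimality this bounds $\mathcal{F}_{\eps_k}(y_k,\zeta_k)$ uniformly in $k$. Coercivity~\eqref{W-ass2} then yields uniform bounds on $y_k$ in $W^{1,p}$, on $\det\nabla y_k$ in $L^r$, and on the distortions $K_{y_k}$ in $L^q$; the machinery of Section~\ref{sec:basics} upgrades this to $y_k\weak y$ in $W^{1,p}$ with $y\in\mathbb{Y}$ together with uniform convergence of the continuous representatives, whence geometric convergence of the deformed domains $\Omega^{y_k}\to\Omega^y$. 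The interfacial term in turn bounds $\int_{\Omega^{y_k}}\sqrt{2\Phi(\zeta_k)}|\nabla\zeta_k|$, which coupled with the $L^\infty$ bound $|\zeta_k|\le R$ and a change of variables onto a common reference domain yields $L^1$-precompactness of $\zeta_k$ on $\Omega^y$ towards a $BV$ phase indicator $\zeta$ taking values in $P$, in the spirit of the classical Modica--Mortola compactness theorem.

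The liminf inequality is then assembled piecewise: the bulk part uses lower semicontinuity via polyconvexity of $W(\cdot,z)$ combined with the continuity~\eqref{W-ass1} in the phase variable and the $L^1$-convergence above; the interfacial part is the Modica--Mortola lower bound adapted to a moving reference domain, as developed in Section~\ref{sec:liminf}.

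The construction of the recovery sequence is the main obstacle. Given $(\tilde y,\tilde\zeta)\in\mathbb{Q}_{y_0}$, I would keep the deformation fixed, $\tilde y_k\equiv\tilde y$, so that the Dirichlet condition on $\partial\Omega$ and the admissibility in $\mathbb{Y}$ are inherited for free. The task is then to produce $\tilde\zeta_k\in W^{1,2}(\Omega^{\tilde y};\mathbb{R}^h)$ with $|\tilde\zeta_k|\le R$ such that $\mathcal{F}^{\rm int}_{\eps_k}(\tilde\zeta_k,\tilde y)\to\tfrac12\sum_{\alpha,\beta}d_{\alpha,\beta}\mathcal{H}^{n-1}(E^{\tilde y}_{\alpha,\beta})$ while the bulk contribution passes to the limit by dominated convergence: assumption~\eqref{W-ass5} supplies the integrable majorant, and~\eqref{W-ass1} gives pointwise convergence of the integrand once $\tilde\zeta_k\to\tilde\zeta$ almost everywhere. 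To realize the interfacial limit, on each reduced-interface piece $E^{\tilde y}_{\alpha,\beta}$ I would glue, in an $\eps_k$-tube around the interface, the optimal one-dimensional transition profile achieving $d_\Phi(p_\alpha,p_\beta)$, extended constantly by $p_\alpha$ and $p_\beta$ away from the tube; triple junctions and higher-order contacts are absorbed by the triangle inequality~\eqref{eq:tr-inequality} baked into $d_\Phi$. Since the relevant geodesics connect finitely many endpoints $p_1,\ldots,p_m$ and $\Phi$ is continuous, their ranges sit in a ball of some radius $R_0<\infty$, which is precisely the threshold appearing in the statement. A subtle point is that $\Omega^{\tilde y}$ need not be Lipschitz, so the tubular-neighborhood construction must be carried out intrinsically on $\Omega^{\tilde y}$, and the Lipschitz assumption on $\Omega^{y_0}$ enters to control interface pieces meeting $\partial\Omega^{\tilde y}$ in a uniform collar without spoiling the boundary match $\tilde y=y_0$ on $\partial\Omega$.
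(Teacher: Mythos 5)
Your plan reproduces the paper's structure almost exactly: a recovery-sequence claim plus the $\Gamma$-liminf from Section~\ref{sec:liminf} plus coercivity-driven compactness, then the standard comparison argument to pass minimality to the limit. The main divergence is in the recovery step: you propose to build the transition layers by hand (gluing one-dimensional $d_\Phi$-geodesic profiles in $\eps_k$-tubes around $\partial^*E^{\tilde y}_{\alpha,\beta}$ and absorbing triple junctions via~\eqref{eq:tr-inequality}), whereas the paper simply invokes Baldo's construction from~\cite{B} and only inspects it to extract the uniform $L^\infty$ bound $|\zeta_k|\le R_0$. Your hand construction is morally the same thing, but it sweeps under the rug precisely the technical work (tubular regularity of $\partial^*E_\alpha$, finite-perimeter vs.\ smooth interfaces, triple-point geometry) that~\cite{B} resolves; citing Baldo is both shorter and more robust. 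The one point you get slightly wrong is the role of the Lipschitz assumption: you worry that ``$\Omega^{\tilde y}$ need not be Lipschitz'' and propose an intrinsic tubular construction, but under the full Dirichlet condition $\Gamma_0=\partial\Omega$ every admissible competitor $\tilde y\in\mathbb Q_{y_0}$ is a homeomorphism agreeing with $y_0$ on $\partial\Omega$, so $\Omega^{\tilde y}=\Omega^{y_0}$ is exactly the Lipschitz domain in the hypothesis and there is nothing to repair; this identity is also what lets the compactness step for $\zeta_k$ proceed on a \emph{fixed} domain rather than requiring the change-of-variables onto a common reference domain that you describe.
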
 


\UUU
\begin{remark}[Incompressibility]\label{rem}
The above results can be specialized to the case of an incompressible
material. 
Indeed, one could impose \EM the incompressibility  constraint by letting $W(F,z)=+\infty$ if $\det F\neq1$,
which is compatible with the assumptions on $W$. For the model case \eqref{example} one might require $W_\alpha(F)=+\infty$ if $\det F\neq 1$ for any $\alpha=1,\ldots, m$. \KKK
\end{remark}

\EOR

\begin{remark}[Mass constraint]\label{lastremark}\rm \UUU Our analysis would allow
  additionally imposing the constraint \EEE
\begin{equation*}\label{masse}
\int_{\O^y}\zeta_i(\xi)\,d\xi=\int_\O \zeta_i(y(x))\,\det\nabla y(x)\,dx=M_i, \qquad i=1,\ldots, h
\end{equation*}
for \UUU  given values  $M_i$. By interpreting $\zeta_i$ as volume
densities, the latter corresponds to constraining the mass of the
single phases. In the incompressible case, see Remark \ref{rem}, such
constraints can be equivalently rewritten, for couples $(y,\zeta)$ with finite energy, in the more standard form \EEE
$$\int_\O z_i(x)\,dx=\int_\O \zeta_i(y(x))\,dx=M_i\qquad i=1,\ldots, h.$$

\end{remark}
\KKK

\section{\EM Properties of admissible deformations\KKK}\label{sec:basics}
%

\EM In this section we introduce the notion of mappings of finite distorsion and the distorsion function which appears in the definition \eqref{eq:Y} of the set $\mathbb Y$ of admissible deformations. Based on the properties of such mappings, for which we mostly refer to \cite{HK}, we shall obtain a suitable closure property of $\mathbb Y$.
Let us start by some basic definitions. In this section, $\Omega$ is an arbitrary open set of $\mathbb R^n$. \KKK

 The set of finite Radon measures $\mu$ on $\O$ with value in $\mathbb R^n$ is denoted by $\mathcal M(\O;\mathbb R^n)$ and it is normed by the total variation
\[
|\mu|(\O):=\sup\left\{\int_\O f\cdot\,d\mu\;|\; f\in C^0_c(\O;\mathbb R^n),\; \|f\|_\infty\le 1\right\}.
\]  
The weak convergence in $\mathcal M(\O;\mathbb R^n)$ of a sequence $(\mu_n)\subset \mathcal M(\O;\mathbb R^n)$ to $\mu\in \mathcal M(\O;\mathbb R^n)$   is defined by
\[
\int_\O f\cdot d\mu_n\to\int_\O f\cdot\,d\mu \quad \mbox{for any $f\in C^0_c(\O;\mathbb R^n)$}.
\]
 For a measurable set $E \subset\Omega$, we
denote the $n$-dimensional Lebesgue measure by $|E|$ and the
$m$-dimensional Hausdorff measure by $\HH^m(E)$.  By   $\chi_E$ we
denote the characteristic function of $E$.  If $g\in L^1_{loc}(\O)$,
 we say that $g\in BV(\O)$ if 
$$
|\nabla g|(\O):=\sup\left\{\int_{\O}g\,\textrm{div}\varphi\, \d x\ | \ \varphi\in C^\infty_{\rm c}(\O;\R^n),\;\|\varphi\|_\infty\leq 
1\right\}<+\infty,
$$
and we say that a measurable set $E\subset \O$ is a set of finite perimeter in $\O$ if $\chi_E\in BV(\O)$. We use the notation $\per (E,\O):=|\nabla \chi_E|(\O)$.
\EM For a set of finite perimeter $E$ in $\Omega$, there is a subset $\partial^*E $ of $\partial E$ (called {\it reduced boundary}) such that $\per(E,\Omega)=\HH^{n-1}(\partial^*E\cap\Omega)$, see \cite{AFP}.
\KKK
 Given  $y: \O \to \Rz^n$, we will use the notations $\O^y:=y(\O)$ and $E^y := y(E)$, \EM and we recall that $y$ is said to satisfy the Lusin condition $N$ if $|E|=0\Rightarrow |E^y|=0$. \KKK
 


\begin{definition}[Finite distorsion]\label{finitedistorsion} 
 Let $\O\subset\mathbb{R}^n$  for $n\ge 2$  be an open set. 
A Sobolev map $y\in W^{1,1}_{\rm loc}(\Omega;\R^{ n})$ with $\det\nabla y\ge 0$ almost everywhere in $\Omega$ is said to be of {\it finite distorsion} if $\det\nabla y\in L^1_{\rm loc}(\Omega)$ and  there is  a function $K:\O\to[1,+\infty]$ with $K<+\infty$ almost everywhere in $\O$ such that $|\nabla y|^n\le K\det\nabla y$. 
For a mapping $y$ of finite distorsion, the {\it (optimal) distorsion function} $K_y:\Omega\to\mathbb{R}$ is defined as
\MK
\[
K_y(x):=\left\{\begin{array}{ll}{|\nabla y(x)|^{ n}}/{\det\nabla y(x)}\quad&\mbox{if $\det\nabla y(x)\neq0$,}\\
1\quad&\mbox{if $\det\nabla y(x)=0$}.\end{array}\right.
\]
\end{definition}

\EM
The following result is a closure property of the set of admissible
deformations. \EEE

\begin{lemma}[Closure]\label{init} Let $p\ge n$ and let $q>n-1$.
Let  $y\in W^{1,p}(\Omega;\mathbb R^n)$ and
let $(y_k)_k\subset \mathbb Y$ be a sequence  such that
\begin{itemize}
\item[i)] $y$ is not constant
\item[ii)] $y_k\to y$ weakly in $W^{1,p}(\Omega;\mathbb R^n)$ as $k\to\infty$, 
  \item[iii)] $C:=\sup_{k\in\mathbb N}\|K _{y_k}\|_{L^q(\Omega)}<+\infty$.
  \end{itemize}
  Then  $y\in \mathbb Y$. In particular, $y$ has a  continuous
  representative which is a homeomorphism.
\end{lemma}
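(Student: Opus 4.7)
The plan is to verify, for the limit map $y$, the three defining properties of the set $\bbY$ in \eqref{eq:Y}: that $\det\nabla y>0$ a.e., that $K_y\in L^q(\Omega)$, and that the Ciarlet--Ne\v{c}as inequality $\int_\Omega\det\nabla y\,\md x\le|\Omega^y|$ holds. The assertion that $y$ admits a continuous representative which is a homeomorphism then follows automatically from the discussion after \eqref{eq:Y}, carried out for $n=3$ in \cite{GKMS} and extending to arbitrary $n\ge 2$. The main ingredients will be weak continuity of minors under $W^{1,p}$-convergence with $p\ge n$, weak lower semicontinuity of a suitably extended polyconvex distorsion integrand, classical results on mappings of finite distorsion (as collected in \cite{HK}), and the area formula.

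The first step is to invoke the Ball--Reshetnyak theory: since $p\ge n$, the weak convergence $y_k\weakto y$ in $W^{1,p}(\Omega;\R^n)$ implies $\det\nabla y_k\weakto\det\nabla y$ in $L^{p/n}(\Omega)$, and in particular $\det\nabla y\ge 0$ a.e. Next, I would extend the integrand $F\mapsto|F|^{nq}/(\det F)^q$ to all of $\R^{n\times n}$ by $+\infty$ whenever $\det F\le 0$; the extension is polyconvex and nonnegative (note that $q>n-1\ge 1$), hence weakly sequentially lower semicontinuous along $(\nabla y_k)$. Combined with hypothesis iii), this yields
\[
\int_\Omega K_y^q\,\md x\le\liminf_{k\to\infty}\int_\Omega K_{y_k}^q\,\md x\le C^q,
\]
so $K_y\in L^q(\Omega)$ and $\det\nabla y(x)>0$ at a.e. $x$ with $\nabla y(x)\ne 0$. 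Thus $y$ is a nonconstant Sobolev mapping of finite distorsion, and the classical theory collected in \cite{HK} provides a continuous representative which is open and discrete, satisfies Lusin's condition $N$, and has $\det\nabla y>0$ a.e. on all of $\Omega$.

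For the Ciarlet--Ne\v{c}as inequality the plan is to exploit uniform convergence of $(y_k)$ to $y$ on $\overline\Omega$. This is supplied by Morrey's embedding when $p>n$ and, in the borderline case $p=n$, by the equicontinuity estimates available for $W^{1,n}$-mappings with distorsion uniformly bounded in $L^q$ with $q>n-1$. Since each $y_k$ is a homeomorphism of $\Omega$ onto $\Omega^{y_k}$ and $y$ is nonconstant, open, and satisfies Lusin's condition $N$, uniform convergence propagates a.e.\ injectivity to the limit. The area formula for mappings of finite distorsion then gives
\[
\int_\Omega\det\nabla y\,\md x=\int_{\R^n}\#\{x\in\Omega:y(x)=\xi\}\,\md\xi=|\Omega^y|,
\]
which is in fact the equality case of the required inequality and completes the verification that $y\in\bbY$.

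The main technical obstacle will be the borderline case $p=n$, where the Morrey embedding into continuous functions is not available and the uniform $L^q$-bound on the distorsions with $q>n-1$ is precisely what provides both the continuity of the limit and the equicontinuity of $(y_k)$ that is needed to transmit a.e.\ injectivity. Once this step is handled, the remainder of the argument parallels the one developed in \cite{GKMS} for $n=3$.
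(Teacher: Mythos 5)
Your outline captures several correct ingredients, but the order of reasoning inverts the paper's argument in a way that opens two genuine gaps.

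First, the claim that ``the classical theory collected in \cite{HK} provides a continuous representative which is open and discrete, satisfies Lusin's condition $N$, and has $\det\nabla y>0$ a.e.\ on all of $\Omega$'' is not correct as stated. Finite distorsion only forces $\det\nabla y\ge 0$ a.e.\ and permits $\nabla y=0$ (hence $\det\nabla y=0$) on a set of positive measure; continuity, openness, discreteness, and Lusin $N$ do follow from $y\in W^{1,n}$ with $K_y\in L^q$, $q>n-1$, but strict positivity of the Jacobian a.e.\ does not. In the paper this positivity is the \emph{last} thing proved: one first establishes the Ciarlet--Ne\v{c}as equality $\int_\Omega\det\nabla y=|\Omega^y|$, deduces from it (via the area formula) that the multiplicity function $N(\Omega,y,\cdot)$ equals $1$ a.e.\ in $\Omega^y$, and only then invokes \cite[Lemma 4.13]{HK} to conclude $\det\nabla y>0$ a.e. Your argument assumes the conclusion at the point where you use it.

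Second, the Ciarlet--Ne\v{c}as step. You propose uniform convergence of $(y_k)$ on $\overline\Omega$ and ``propagation of a.e.\ injectivity to the limit.'' Uniform limits of injective continuous maps need not be injective (think of a fold), so the propagation step has no general justification, and uniform convergence up to $\overline\Omega$ is itself stronger than what the hypotheses deliver when $p=n$ (the paper only obtains uniform convergence on compact subsets of $\Omega$, and that only in the later Lemma \ref{equicontinuouslemma}, not here). The paper instead circumvents injectivity entirely: using M\"uller's higher integrability for non-negative Jacobians to get $\det\nabla y_k\weakto\det\nabla y$ weakly in $L^1(E)$ for $E\subset\subset\Omega$ open, combined with the image--volume convergence $|E^{y_k}|\to|E^y|$ from \cite[Theorem 4.4]{GP} and the area formula with equality (valid because $W^{1,n}$ finite-distorsion maps satisfy Lusin $N$), one gets $\int_E\det\nabla y=|E^y|$ directly, then exhausts $\Omega$ by compactly contained open sets. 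Relatedly, your step ``$\det\nabla y_k\weakto\det\nabla y$ in $L^{p/n}$'' is not automatic at the borderline $p=n$; it is exactly M\"uller's local higher-integrability result that rescues it there, and only locally.

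Your lower-semicontinuity route to $K_y\in L^q(\Omega)$ is plausible as an alternative to the paper's appeal to Gehring--Iwaniec/Fusco--Moscariello--Sbordone, but it would need to be localized (again because the Jacobians only converge weakly in $L^1$ on compactly contained sets when $p=n$), so it does not actually simplify matters. Once (a) the strict positivity of $\det\nabla y$ is deferred to after Ciarlet--Ne\v{c}as and (b) Ciarlet--Ne\v{c}as is obtained via convergence of image volumes rather than via transmitted injectivity, your proposal would align with the paper's proof.
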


%
%



\begin{proof}
It is enough to consider the hardest case $p=n$. We recall from \cite[Section 3]{GKMS} that any element of $\mathbb Y$ has a continuous representative which is a \UUU homeomorphisms \EEE of $\Omega$ onto~$\Omega^y$.

First of all, up to extraction of a not relabeled subsequence, there exists a function $K\in L^q(\Omega)$ such that $K_{y_k}\to K$ weakly in $L^q(\Omega)$ as $k\to\infty$. Then, \EM a result by Gehring and Iwaniec \cite{GI}, see also \cite{FMS} \KKK ensures that $y$ is a mapping of finite distorsion such that 
$$\|K_y\|_{L^q(\Omega)}\le\|K\|_{L^q(\Omega)}\le\liminf_{k\to+\infty}\|K_{y_k}\|_{L^q(\Omega)}\le C.$$ In particular, $y$ has a continuous representative by \cite[Theorem 2.3]{HK}. Moreover, since $y_k\to y$ weakly in $W^{1,n}(\Omega;\mathbb R^n)$,  the higher integrability result by M\"uller \cite{Mu} entails $\det\nabla y^k\to\det\nabla y$ weakly in $L^1(E)$ for any open set $E$ compactly contained in $\Omega$.
Therefore, we may invoke the result in \cite[Theorem 4.4]{GP} to infer that $|E^{y_k}|\to |E^y|$ as $k\to+\infty$, recalling that the measure-theoretic images from \cite{GP} are in this case reduced to the usual images through the continuous representatives of $y_k$ and $y$. Moreover, \EM by \cite[Theorem 4.5]{HK},  continuous representatives of $W^{1,n}(\Omega;\mathbb R^n)$ mappings of finite distorsion satisfy the Lusin condition $N$, and thus the area formula holds with equality, see \cite[Theorem A.35]{HK}. In particular,  since the $y_k$'s are in fact  homeomorphisms, the area formula yields \KKK
\[
\int_E\det\nabla y\,dx=\lim_{k\to+\infty}\int_E \det\nabla y_k\,dx= \lim_{k\to+\infty}|E^{y_k}|=|E^y|.
\] 
By taking now an increasing sequence of open sets  $E_j$, compactly contained in $\Omega$, such that $\cup_{j=1}^{\infty} E_j=\Omega$, and by applying the monotone convergence theorem, we obtain the validity of the \UUU Ciarlet-Ne\v cas \EEE condition (with equality) for $y$. We notice that since $y$ is not constant, it is an open map by \cite[Theorem 3.4]{HK}, therefore $\O^y$ is open.  The \UUU Ciarlet-Ne\v cas \EEE condition entails that the multiplicity function $N(\Omega,y,\cdot)$ of $y$ on $\Omega$ is a.e. equal to $1$ in $\Omega^y$: indeed, since $y$ satisfies the Lusin condition $N$, the area formula and the Ciarlet-Ne\v{c}as condition yield 
\[
|\Omega^y|\le \int_{\Omega^y}N(\Omega,y,\xi)\,d\xi=\int_\Omega\det\nabla y\le |\Omega^y|
\]
so that $N(\Omega,y,\xi)=1$ for a.e. $\xi\in\Omega^y$.
 By invoking \cite[Lemma 4.13]{HK} we conclude that $\det\nabla y>0$ a.e. in $\Omega$. This proves that $y\in\mathbb Y$. 
\end{proof}

\KKK

\section{\EM Interfacial measures \KKK}\label{sil2}
%

\EM This section is devoted to introduce a fundamental notions of our theory, in particular we introduce {\it interfacial measures}  and \EEE provide
a generalization of \cite[Theorem 2.2]{GKMS}. 
 \EM In this section,  $\O\subset\mathbb{R}^n$ denotes a generic open set. \KKK

\begin{definition}[Interfacial measure]\label{ifmeas}
 Let $p\ge n$. 
Given a homeomorphism of finite distorsion $y\in W^{1,p}_{loc}(\O;\mathbb R^n)$ and $g\in L^r_{loc}(\Omega)$ for some $r\in[\tfrac{p}{p-n},+\infty]$, we say that $p_{y,g}\in\mathcal M(\O;\mathbb R^n)$ is an interfacial measure  for the couple $(y,g)$ \KKK if 
\begin{equation}\label{eq:p_yg-1}
 \int_{ \Omega} g\,\cof(\nabla 
y):\nabla\psi \, \d x=\int_{\O}\psi\cdot \d \bp_{y,g}\qquad \mbox{for any } \psi\in C^\infty_{\rm c}(\O;\R^n).
\end{equation}
\end{definition}
The relevance of this notion comes from its role in the characterization of interface areas in the deformed configurations, in case $g$ is a distance function from an energy well. It  will be thoroughly discussed in Theorem \ref{reverse} and in the rest of the paper. If $y$ is the identity map on $\O$, requiring the existence of an interfacial measure is equivalent to saying that $g\in BV(\O)$. If \eqref{eq:p_yg-1} holds, $p_{y,g}$ is the distributional divergence of $-g\,\cof\nabla y$ in $\O$.

\UUU In the following, we \EEE   give a characterization of those couples $(g,y)$, where  $g\in L^\infty_{loc}(\O)$ \KKK and $y$ is a  homeomorphism  in  $W^{1,n}_{loc}(\O)$,  such that $g\circ y^{-1}\in BV(\Omega^y)$. We state the theorem after having  introduced some preliminary notation.

	For a homeomorphism  $y:\Omega\to\mathbb R^n$ and a finite Radon measure
	 $\mu\in\mathcal M(\Omega^y;\mathbb R^n)$, the pull-back measure of $\mu$ through  $y$, denoted $y_\flat\mu$, is the measure in $\mathcal M(\Omega;\mathbb R^n)$ defined by
	\[
	\int_\Omega \psi\cdot\,d(y_\flat\mu)=\int_{\Omega^y}\psi\circ y^{-1}\cdot\,d\mu\qquad\mbox{for any bounded Borel function $\psi:\Omega\to\mathbb R^n$}.
	\] 
	Clearly, $y_\flat \mu(\Omega)=\mu(\O^y)$. Moreover, $|y_\flat\mu|(\Omega)= |\mu|(\Omega^y)$, since is $y$ is a homeomorphism.

	\begin{theorem}[\UUU Characterization] \label{reverse}
	\MK Let  $p\ge n$ and let  $y\in
W^{1,p}_{\rm loc}(\Omega;\R^n)$ be a homeomorphism of finite distorsion. 
Let $g\in L^\infty_{loc}(\Omega)$. \EEE
 Then, $g\circ y^{-1}\in BV(\Omega^y)$ if and only if a finite Radon measure $\bp_{y,g}\in\mathcal{M}_{}(\Omega;\R^n)$ exists such that \eqref{eq:p_yg-1} holds.
In such case, 
\begin{equation}\label{emme}
\bp_{y,g}=y_\flat(\nabla(g\circ y^{-1}))=-\mathrm{div}(g\,\cof\nabla y)\qquad\mbox{in $\;\mathcal M(\Omega;\mathbb R^n)$}.
\end{equation}
	\end{theorem}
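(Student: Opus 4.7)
The strategy is to transport the identity \eqref{eq:p_yg-1} between $\Omega$ and $\Omega^y$ through the homeomorphism $y$, using the algebraic relation $\cof(\nabla y)(\nabla y)^T=(\det\nabla y)I$ together with the area formula. Since $y\in W^{1,p}_{\rm loc}$ with $p\ge n$ is a homeomorphism of finite distortion, it satisfies the Lusin $(N)$ condition by \cite[Theorem 4.5]{HK}, so the area formula applies. For any $\phi\in C^\infty_{\rm c}(\Omega^y;\R^n)$, the composition $\psi:=\phi\circ y$ is continuous, compactly supported in $y^{-1}(\mathrm{supp}\,\phi)\Subset\Omega$, lies in $W^{1,p}_0(\Omega;\R^n)$, and satisfies $\cof\nabla y:\nabla(\phi\circ y)=(\det\nabla y)\,(\mathrm{div}\,\phi)\circ y$ almost everywhere in $\Omega$. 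Combined with the area formula, this yields the bridging identity
\begin{equation*}
\int_\Omega g\,\cof\nabla y:\nabla(\phi\circ y)\,\md x=\int_{\Omega^y}(g\circ y^{-1})\,\mathrm{div}\,\phi\,\md\xi,\qquad \phi\in C^\infty_{\rm c}(\Omega^y;\R^n),
\end{equation*}
which is the engine for both implications.

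For the $(\Leftarrow)$ direction, assuming $g\circ y^{-1}\in BV(\Omega^y)$, I construct $\bp_{y,g}$ by mollification: let $G_\eta:=(g\circ y^{-1})*\rho_\eta$, so that $\|G_\eta\|_\infty\le\|g\|_\infty$ and $\nabla G_\eta\weak\nabla(g\circ y^{-1})$ as finite measures. For each smooth $G_\eta$, the chain rule for $G_\eta\circ y$ coupled with Piola's identity $\mathrm{div}(\cof\nabla y)=0$ and the area formula yields, for all $\psi\in C^\infty_{\rm c}(\Omega;\R^n)$,
\begin{equation*}
\int_\Omega(G_\eta\circ y)\,\cof\nabla y:\nabla\psi\,\md x=-\int_{\Omega^y}(\psi\circ y^{-1})\cdot\nabla G_\eta\,\md\xi.
\end{equation*}
Passing $\eta\to 0$, the uniform $L^\infty$ bound on $G_\eta$ together with $\cof\nabla y\in L^{p/(n-1)}_{\rm loc}\subset L^{p/(p-1)}_{\rm loc}$ transfers the left-hand side to $\int_\Omega g\,\cof\nabla y:\nabla\psi\,\md x$, while the continuity and compact support of $\psi\circ y^{-1}$ in $\Omega^y$ transfer the right-hand side to $-\int_\Omega\psi\cdot d(y_\flat\nabla(g\circ y^{-1}))$. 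This produces $\bp_{y,g}$ verifying \eqref{eq:p_yg-1} with the representation \eqref{emme}.

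For the $(\Rightarrow)$ direction, assume \eqref{eq:p_yg-1} holds with some finite Radon measure $\bp_{y,g}$. I first extend the identity from $\psi\in C^\infty_{\rm c}(\Omega;\R^n)$ to test fields of the form $\psi=\phi\circ y$ by standard mollification, obtaining $\psi_k\in C^\infty_{\rm c}$ with $\psi_k\to\phi\circ y$ uniformly and $\nabla\psi_k\to\nabla(\phi\circ y)=(\nabla\phi\circ y)\,\nabla y$ in $L^p$: H\"older duality against $g\,\cof\nabla y\in L^{p/(p-1)}_{\rm loc}$ transfers the left-hand side, while uniform convergence against the finite measure $\bp_{y,g}$ transfers the right-hand side. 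Invoking the bridging identity then gives
\begin{equation*}
\int_{\Omega^y}(g\circ y^{-1})\,\mathrm{div}\,\phi\,\md\xi=\int_\Omega(\phi\circ y)\cdot d\bp_{y,g}\qquad\forall\phi\in C^\infty_{\rm c}(\Omega^y;\R^n),
\end{equation*}
whose right-hand side is bounded by $\|\phi\|_\infty|\bp_{y,g}|(\Omega)$; hence $g\circ y^{-1}\in BV(\Omega^y)$ by definition, and \eqref{emme} follows by transporting the resulting distributional gradient back through $y$. The main technical hurdle is the criticality of $p=n$: both the Lusin $(N)$/area formula and the H\"older pairing $L^p$--$L^{p/(p-1)}$ against $\cof\nabla y$ are sharp at this endpoint, which is precisely why the finite-distortion framework encoded in $\mathbb Y$ is essential.
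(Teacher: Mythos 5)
Your proof follows essentially the same approach as the paper's: for the $(\Leftarrow)$ direction you mollify $g\circ y^{-1}$, apply the chain rule together with Piola's identity $\mathrm{div}(\cof\nabla y)=0$ and the area formula, then pass to the limit by dominated convergence and weak-$*$ convergence of the mollified gradient, exactly as the paper does in its Step~1; for the $(\Rightarrow)$ direction you establish the bridging identity $\int_\Omega g\,\cof\nabla y:\nabla(\varphi\circ y)\,\md x=\int_{\Omega^y}(g\circ y^{-1})\,\mathrm{div}\,\varphi\,\md\xi$ via the area formula and extend \eqref{eq:p_yg-1} to $W^{1,p}\cap C_{\rm c}^0$ test fields by density, which is precisely the paper's Step~2 leading to the bound $|\nabla(g\circ y^{-1})|(\Omega^y)\le|\bp_{y,g}|(\Omega)$. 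The decomposition, the use of the Lusin $(N)$ property to justify the area formula at the critical exponent $p=n$, and the integrability bookkeeping for $\cof\nabla y$ all coincide with the paper's argument.
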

\begin{proof}
We preliminarily observe that a homeomorphism  in $W^{1,n}_{\rm loc}(\Omega;\R^n)$
 satisfies the Lusin's condition $N$ \cite[Theorem. 3]{reshetnyak}, i.e., $|E|=0\Rightarrow |E^y|=0$ for any measurable set $E\in\Omega$. As a consequence  
$E^y$ is measurable for any measurable set $E\in\Omega$ and we may apply the area formula, see \cite[Theorem A.35]{HK}:  if $f\in L^{r}_{loc}(\Omega)$ for some $r\in[\tfrac{p}{p-n},+\infty]$,  for the measurable function $f\circ y^{-1}$ there holds
\[
\int_{E^y}|f|\circ y^{-1}\,d\xi=\int_E |f|\,\det\nabla y\,dx,
\]
for any measurable set $E\subset\Omega$.
In particular  we obtain $f\circ y^{-1}\in L^1_{loc}(\Omega^y)$, since we have by assumption $\det\nabla y\in L^{p/n}_{loc}(\Omega)$ and $f\in L^{r}_{loc}(\Omega)$.
The Lusin condition $N$ also implies that $\|g\circ y^{-1}\|_{L^\infty(E^y)}=\|g\|_{L^\infty(E)}$ for any measurable set $E\subset\O$ so that we obtain $g\circ y^{-1}\in L^\infty_{loc}(\O^y)$, since $g\in L^\infty_{loc}(\O)$.
\KKK

{\textbf{Step 1.} }
Let us assume $g\circ y^{-1}\in BV(\Omega^y)$. We shall verify that, by taking $\bp_{y,g}:=y_\flat(\nabla(g\circ y^{-1}))$,   \eqref{eq:p_yg-1} holds along with \eqref{emme}. 

First, we observe that $y_\flat(\nabla(g\circ y^{-1}))\in\mathcal M(\O;\mathbb R^n)$ by definition of pull-back, since $\nabla(g\circ y^{-1})\in\mathcal M(\O^y;\mathbb R^n)$.
Let $\psi\in C^\infty_c(\Omega;\mathbb R^n)$. Let $G_\eps:=(g\circ y^{-1})\ast\rho_\eps$, where $\rho_\eps(x):=\eps^{-n}\rho(x/\eps)$, $x\in\mathbb R^n$, and $\rho$ is the standard unit symmetric mollifier in $\mathbb R^n$, so that  
 (up to passing to a vanishing sequence, which we do not include in the notation)
 $G_\eps\to g\circ y^{-1}$ a.e. in $\Omega^y$ and $\nabla G_\eps\rightharpoonup \nabla(g\circ y^{-1})$ weakly in $\mathcal M(\Omega^y;\mathbb R^n)$. 
Therefore,
\begin{equation}\label{unos}
\int_\Omega \psi\cdot d(y_\flat(\nabla(g\circ y^{-1})))=\int_{\Omega^y}(\psi\circ y^{-1})\cdot\,d(\nabla(g\circ y^{-1}))=\lim_{\eps\to 0}\int_{\Omega^y}(\psi\circ y^{-1})\cdot \nabla G_\eps\,d\xi. 
\end{equation}
There holds
 $(\nabla y)^{-T}\nabla (G_\eps\circ y)= (\nabla G_\eps)\circ y$  a.e. in $D:=\{x\in\Omega:\det\nabla y(x)>0\}$.
 The cofactor matrix is divergence-free,
 implying   $\mathrm{div}((\cof\nabla y)^T\psi)=\cof\nabla y:\nabla \psi$. Moreover, $\cof\nabla y=0$ holds a.e. on $\Omega\setminus D$ since $y$ is a mapping of finite distorsion. Hence,
\begin{equation}\label{dues}\begin{aligned}
&\int_{\Omega^y}(\psi\circ y^{-1})\cdot \nabla G_\eps\,d\xi =\int_{D}(\det\nabla y)	\,\psi\cdot(\nabla G_\eps)\circ y\,dx\\&\qquad=
\int_{D}(\det\nabla y)	\,\psi\cdot (\nabla y)^{-T}\nabla(G_\eps\circ y)\,dx=
\int_{D}(\det\nabla y)	\,(\nabla y)^{-1}\psi\cdot\nabla (G_\eps\circ y)\,dx\\&\qquad=
-\int_\Omega (G_\eps\circ y)\,\mathrm{div}((\cof\nabla y)^T\,\psi)\,dx =
-\int_\Omega (G_\eps\circ y) \,\cof\nabla y:\nabla \psi\,dx
\end{aligned}\end{equation}
Since $G_\eps\to g\circ y^{-1}$ pointwise a.e. in $\Omega_y$, we obtain $G_\eps\circ y\to g$ a.e. in $D$.
Indeed, the area formula again implies that for a measurable set $E\subset D$ there holds
$
|E^y|=\int_E\det\nabla y
$ so that $|E^y|=0$ implies $|E|=0$. 
In particular, if $E=D\cap\mathrm{supp}(\psi)$, then $\|G_\eps\circ y\|_{L^\infty(E)}=\|G_\eps\|_{L^\infty(E^y)}\le \|g\circ y^{-1}\|_{L^\infty(E^y)}=\|g\|_{L^{\infty}(E)}<\infty$.
\KKK
 As $\cof\nabla y\in L^1_{loc}(\Omega) $ and $\cof\nabla y=0$  a.e. on $\Omega\setminus D$, by dominated convergence we obtain
\begin{equation}\label{tres}
\lim_{\eps\to 0}\int_\Omega (G_\eps\circ y)\,\nabla \psi:\cof\nabla y\,dx=\int_\Omega g\,\nabla \psi:\cof\nabla y\,dx.
\end{equation}
By combining \eqref{unos}, \eqref{dues} and \eqref{tres} we get
\[
\int_\Omega \psi\cdot d(y^{-1}_\flat(\nabla(g\circ y^{-1})))= -\int_\Omega g\,\nabla \psi:\cof\nabla y\,dx
\]
for any $\psi\in C^\infty_c(\Omega;\mathbb R^n)$. Hence,  $\bp_{y,g}$ satisfies \eqref{eq:p_yg-1} and \eqref{emme} holds.

{\textbf{Step 2.}}
Let us now assume that $\bp_{y,g}\in \mathcal M(\Omega;\mathbb R^n)$ exists such that \eqref{eq:p_yg-1} holds and let us verify that $g\circ y^{-1}\in BV(\Omega^y)$.

The area formula gives
\begin{equation}\label{eq:chain}\begin{aligned}
&|\nabla(g\circ y^{-1})|(\Omega^y)=\sup\left\{\int_{\Omega^y}g(y^{-1}(\xi))\,\textrm{div}\varphi(\xi)\, \d \xi \;|\;\varphi\in C^\infty_{\rm c}(\O^y;\R^n),\;\|\varphi\|_\infty\leq 
1\right\}\\&\qquad
=\sup\left\{\int_{\O}g(x)\,\textrm{div}\varphi(y(x))\,\textrm{det}\nabla y(x)\,\d 
x\;|\;\varphi\in C^\infty_{\rm c}(\O^y;\R^n),\;\|\varphi\|_\infty\leq 1\right\}\\&\qquad
=\sup\left\{\int_{\O}g\,\cof(\nabla y):\nabla(\varphi\circ y)\,\d x\;|\;\varphi\in 
C^\infty_{\rm c}(\O^y;\R^3),\;\|\varphi\|_\infty\leq 1\right\},
\end{aligned}
\end{equation}
 where the second equality is due to the
identity $(\textrm{div}\varphi)\circ
  y\det\nabla y=\cof\nabla y:\nabla(\varphi\circ y)$
which holds a.e. in $\Omega$, as a consequence of the chain rule
and  of  the matrix identity $(\cof {A}) {A}^T=I\det {A}$.
 As  $y\in W^{1,p}_{\rm loc}(\Omega;\R^n)$,  we have $\cof\nabla y\in
L^{q}_{\rm loc}(\Omega)$  with  $q=p/(n-1)$. Since $g\in L^{{p}/{(p-n)}}_{loc}(\Omega)$ we get $g\,\cof\nabla y:\nabla(\varphi\circ y) \in L^1_{loc}(\Omega)$. 
The function $g\,\cof\nabla y:\nabla(\varphi\circ y)$ is compactly supported in $\Omega$, as $y$ is a homeomorphism and $\varphi$ is compactly supported in $\O^y$.
As a consequence, the relation \eqref{eq:p_yg-1} can be extended by continuity to 
all test functions  in the class $W^{1,p}(\O;\R^n)\cap C_{\rm c}^0(\O;\R^n)$
since $\bp_{y,g} \in\mathcal M(\O;\mathbb R^n)$.
 Therefore, $\varphi\circ y$ is an admissible test function for 
 equality  \eqref{eq:p_yg-1}.
 From  \eqref{eq:chain}, from the validity \eqref{eq:p_yg-1} and from the fact that  \eqref{eq:p_yg-1} holds with test functions in $W^{1,p}(\O;\R^n)\cap C_{\rm c}^0(\O;\R^n)$ we obtain
\begin{equation}\label{plus}
	|\nabla(g\circ y^{-1})|(\Omega^y)=\sup\left\{\int_{\O}(\varphi\circ y)\cdot \d 
\bp_{y,g}\;|\;\varphi\in C^\infty_{\rm c}(\O^y;\R^n),\;\|\varphi\|_\infty\leq 1\right\}.
\end{equation}
The definition of total variation and \eqref{plus} directly imply
$
|\nabla(g\circ y^{-1})|(\Omega^y)	\leq |\bp_{y,g}|(\O)
$.
\end{proof}

\section{Convergence of the phases}\label{sec:convergenceofphases}

\EM From here and through the rest of the paper, $\Omega$ is a bounded open Lipschitz set.
\UUU In this section, we prepare some tools which will later be used in the limit
passages in Sections \ref{sec:liminf} and \ref{sec:main}. \EEE

\begin{lemma}\label{equicontinuouslemma} Let $p\ge n$.
Let $(y_k)_k\subset W^{1,p}(\Omega;\mathbb R^n)$, $y\in W^{1,p}(\Omega;\mathbb R^n)$ be  homeomorphisms of finite distorsion such that $y_k\to y$ weakly in $W^{1,p}(\Omega;\mathbb R^n)$.
\begin{itemize}
\item[i)]
 If $A\subset\subset \Omega^y$, then there exists $k_0\in\mathbb N$ such that $A\subset\Omega^{y_k}$ for any $k>k_0$. 
\item[ii)] Assuming in addition that the sequence $(\det\nabla y_k)_k$ is equiintegrable on $\Omega$, there holds  $\lim_{k\to\infty}|\Omega^y\Delta \Omega^{y_k}|=0$. \KKK
\end{itemize}
\end{lemma}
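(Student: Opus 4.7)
The strategy is to first establish the topological inclusion in (i) by a degree-theoretic argument based on local uniform convergence, and then to derive (ii) by combining (i) with weak $L^1$ convergence of the Jacobians.

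\emph{Part (i).} The starting point is that a weakly convergent sequence in $W^{1,p}(\Omega;\mathbb R^n)$ with $p\ge n$, made of homeomorphisms of finite distortion converging to a homeomorphism of finite distortion, converges uniformly on compact subsets of $\Omega$: by Morrey embedding if $p>n$, and by the equicontinuity properties of finite-distortion mappings of the form invoked in Section~\ref{sec:basics} when $p=n$. Given $\xi_0\in A$, let $x_0:=y^{-1}(\xi_0)\in\Omega$ and fix a closed ball $\overline{B_r(x_0)}\subset\Omega$. By invariance of domain applied to the homeomorphism $y$, the set $y(B_r(x_0))$ is open and contains $\xi_0$, so $\eta:=\mathrm{dist}(\xi_0,y(\partial B_r(x_0)))>0$. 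For $k$ large, uniform convergence on $\overline{B_r(x_0)}$ yields $\|y_k-y\|_\infty<\eta/2$ there, so $\xi_0\notin y_k(\partial B_r(x_0))$, and a straight-line homotopy between $y$ and $y_k$ avoids $\xi_0$ on $\partial B_r(x_0)$. By the homotopy invariance of the Brouwer degree and the fact that both maps are orientation-preserving (positive Jacobian a.e.), one gets $\deg(y_k,B_r(x_0),\xi_0)=\deg(y,B_r(x_0),\xi_0)=1$, whence $\xi_0\in y_k(B_r(x_0))\subset\Omega^{y_k}$. Covering the compact set $A$ by finitely many such balls and taking the maximum of the corresponding thresholds yields the claim.

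\emph{Part (ii).} First observe that, since each $y_k$ is a homeomorphism of finite distortion in $W^{1,p}(\Omega;\mathbb R^n)$ with $p\ge n$, the Lusin condition $N$ holds (cf.\ the proof of Lemma~\ref{init}), so the area formula gives $|\Omega^{y_k}|=\int_\Omega\det\nabla y_k\,dx$ and analogously for $y$. By M\"uller's higher-integrability result, $\det\nabla y_k\weak\det\nabla y$ weakly in $L^1(E)$ for every open $E\subset\subset\Omega$; the equiintegrability assumption on $(\det\nabla y_k)_k$ then upgrades this, by a standard truncation argument, to weak convergence in $L^1(\Omega)$. In particular $|\Omega^{y_k}|\to|\Omega^y|$.

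Now fix an exhaustion $A_m\subset\subset\Omega^y$ with $|\Omega^y\setminus A_m|\to 0$. By (i), for each $m$ there exists $k_0(m)$ such that $A_m\subset\Omega^{y_k}\cap\Omega^y$ for all $k\ge k_0(m)$. Hence
$$
|\Omega^{y_k}\setminus\Omega^y|=|\Omega^{y_k}|-|\Omega^{y_k}\cap\Omega^y|\le|\Omega^{y_k}|-|A_m|,
$$
and symmetrically $|\Omega^y\setminus\Omega^{y_k}|\le|\Omega^y|-|A_m|$. Taking $\limsup_{k\to\infty}$ and then letting $m\to\infty$ yields $|\Omega^y\Delta\Omega^{y_k}|\to 0$.

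\emph{Expected obstacle.} The delicate point is securing the uniform convergence on compact subsets needed in (i) at the borderline exponent $p=n$, where Morrey's embedding is unavailable and one must exploit the equicontinuity of homeomorphic mappings of finite distortion (as is done implicitly in Section~\ref{sec:basics}); the rest of the argument is a routine combination of exhaustion, the area formula, and the M\"uller--equiintegrability passage to weak $L^1$ convergence of Jacobians on all of $\Omega$.
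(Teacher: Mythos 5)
Your proposal is correct and reaches both conclusions, but by a somewhat different route than the paper. For part~(i), the paper also begins by establishing local uniform convergence (via the Heinonen--Koskela modulus of continuity for $W^{1,n}$ finite-distortion maps, together with an $L^1$-bound argument and Ascoli--Arzel\`a), but then closes the argument by pulling back a set $S$ with $A\subset\subset S\subset\subset\Omega^y$ to $U=y^{-1}(S)$, setting $S_k=y_k(U)$, and observing that $\partial S_k$ stays within a $\delta$-neighborhood of $\partial S$ while $\overline A$ is more than $2\delta$ from $\partial S$. Your Brouwer-degree argument via the straight-line homotopy and orientation preservation makes precise the implicit topological step there (that closeness of boundaries forces $\overline A\subset S_k$ rather than $\overline A\cap S_k=\emptyset$), so this is a small but genuine improvement in rigor. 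For part~(ii), the difference is more substantial: the paper first obtains $\det\nabla y_k\weakto\det\nabla y$ in $L^1(\Omega)$ (for $p>n$ from boundedness, for $p=n$ from the equiintegrability hypothesis) and then simply invokes Giacomini--Ponsiglione \cite[Theorem 4.4]{GP} on measure-theoretic images to conclude $|\Omega^y\Delta\Omega^{y_k}|\to 0$; you instead give a self-contained derivation, combining the Lusin $N$ property and the area formula (so that $|\Omega^{y_k}|=\int_\Omega\det\nabla y_k$), the same weak $L^1(\Omega)$ convergence of Jacobians, and an exhaustion $A_m\nearrow\Omega^y$ together with part~(i). Your version trades the external citation for a short direct calculation and is arguably clearer in this specific setting, whereas the paper's use of \cite{GP} keeps the exposition shorter and reuses a tool already cited in Lemma~\ref{init}.
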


\begin{proof} 
	i) First we prove that the sequence $y_k $ is uniformly converging on any  compact subset  $K\subset\subset \O$. From \cite[Theorem 1.3]{HeiK} we deduce that there exists a constant $ C(K,n)$ such that, for any $k$,
	$$
	\forall x_1,x_2 \in K\quad |y_k(x_1)-y_k(x_2)|\leq C(K,n)\|\nabla y_k \|_{L^n(\O)}\, \theta(|x_1-x_2|), \quad \theta(t):=|\ln(2/t)|^{-1/n}.
	$$
	Since $\|\nabla y_k\|_{L^n(\Omega)}$ is bounded, we obtain the equicontinuity of the sequence $(y_k)$ over $K$.
	Moreover, by combining \UUU equicontinuity \EEE on compact domains  with the bound  $\| y_k\|_{L^1(\O)} <C$, the uniform boundedness of  $y_k$  on $K$ follows:
	$$
	\sup\{ |y_k(x)|  : k\in \mathbb N,\, x\in K\}<\infty.
	$$
	In fact, suppose by contradiction that there exist sequences $(k_\ell)_\ell$ and $(x_\ell)_\ell \subset K$ such that $|y_{k_\ell}(x_\ell)|\ge \ell $ for any $\ell\in\mathbb N$. Fix a $\delta>0$ such that $K+B_\delta(0)\subset K'\subset \subset \O $ for a  compact set $K'$. By the equicontinuity on $K'$, there exist $r\in (0,\delta)$ such that 
	$$
	\forall k\in\mathbb N\;\;\forall x\in K\;\;\forall x'\in B_r(x),\quad x'\in K' \quad\mathrm{and} \quad |y_k(x')-y_k(x)|<1.
	$$ 
	Therefore, $ 	\| y_{k_\ell}\|_{L^1}\ge\int_{B_r(x_\ell)}|y_{k_\ell}(x')|dx'\ge |B_r(0)|\, (\ell-1)\to \infty$ for $\ell\to \infty$, 
	a contradiction.
	\EEE
By Ascoli-Arzel\`a
theorem, $y_k\to y$ uniformly on any compact subset of $\Omega$.

Let $S$ be such that $A\subset\subset S \subset\subset \Omega^y$. Since $y,y_k$ are homeomorphisms and there is uniform convergence of $y_k$ to $y$ on compact subsets of $\Omega$,  it is easy to conclude. Indeed, let $\eps:=\mathrm{dist}(\overline A, \partial S)$ so that $\eps>0$.
 Let
$U=y^{-1}(S)$ so that $U\subset\subset\O$ as $y$ is a homeomorphism. Let   $S_k=y_k(U)$. Since $y,y^k\in\bbY$
are homeomorphisms on $\overline{U}$, we have $\partial S=y(\partial
U)$ and $\partial S_k=y_k(\partial U)$.  By the above result we have
 $y_k\to y$ uniformly on $\overline U$,   thus fixing $\delta<\eps/2$ we get
$\sup_{x\in \overline U}|y(x)-y_k(x)|<\delta$  for $k$ large enough.   Hence, for
any boundary point $\xi\in\partial S_k$,   we have that 
$d(\xi,\partial S)<\delta$  for $k$ large enough. 
Since $d(\overline{A},\partial S)=\eps>2\delta$, we obtain $d(\overline A,\partial S_k)>\delta$, hence $\overline A\subset S_k\subset \Omega^{y_k}$ for any large enough $k$.

ii)  We have $\det \nabla y_k\to\det\nabla y$ weakly in $L^1(\Omega)$ as $k\to+\infty$. This follows from the boundedness in $L^p(\Omega;\mathbb R^n)$ of the sequence $(\nabla y_k)_k$ if $p>n$ and from the additional equiintegrability assumption if $p=n$. \KKK
Then, the property $\lim_{k\to\infty}|\Omega^y\Delta \Omega^{y_k}|=0$  is a consequence of  \cite[Theorem 4.4]{GP}. Indeed, the measure-theoretic images appearing in \cite{GP} are the usual images for our mappings that have a continuous representative.
\end{proof}
%
%
\begin{lemma}\label{equilemma} Let $p\ge n$ and $q>n-1$.
Let $(y_k)_k\subset W^{1,p}(\Omega;\mathbb R^n)$, $y\in W^{1,p}(\Omega;\mathbb R^n)$ be  homeomorphisms of finite distorsion  such that $y_k\to y$ weakly in $W^{1,p}(\Omega;\mathbb R^n)$ and $\sup_{k\in\mathbb N}\|K_{y_k}\|_{L^q(\Omega)}<+\infty$.
 Suppose that the sequence $(\det\nabla y_k)_k$ is equiintegrable on $\Omega$. \KKK
 Then 
$
	|y^{-1}(O_k)|\to |\O|$ and $|y_k^{-1}(O_k)|\to |\O|
	$
as $k\to\infty$, where $O^k:=\Omega^y\cap \Omega^{y_k}$.
\end{lemma}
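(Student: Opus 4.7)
The plan is to handle the two convergences separately. For $|y^{-1}(O_k)|\to|\Omega|$ I would pull the problem back to $\Omega$ using the area formula for the limit map $y$ together with the positivity of $\det\nabla y$. For $|y_k^{-1}(O_k)|\to|\Omega|$ I would not invoke the area formula for the moving maps $y_k$ at all, but instead exploit uniform convergence on compacta, as established in the proof of Lemma \ref{equicontinuouslemma}(i).

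For the first claim, set $A_k:=\Omega^y\setminus O_k=\Omega^y\setminus\Omega^{y_k}$, so that $|A_k|\to 0$ by Lemma \ref{equicontinuouslemma}(ii). Since $y$ is a homeomorphism of finite distorsion in $W^{1,n}$, its continuous representative satisfies Lusin's condition $N$ by \cite[Theorem 4.5]{HK}, and hence the area formula holds with equality. Combining this with the fact that $y$ is a homeomorphism (so the multiplicity is $1$), I would invoke \cite[Lemma 4.13]{HK}, exactly as in the proof of Lemma \ref{init}, to conclude $\det\nabla y>0$ a.e.\ in $\Omega$. The area formula then gives
\[
\int_{y^{-1}(A_k)}\det\nabla y\,dx=|A_k|\to 0.
\]
For any $\delta>0$, splitting $y^{-1}(A_k)$ according to whether $\det\nabla y\geq\delta$ or $\det\nabla y<\delta$ and applying Chebyshev's inequality on the first piece yields
\[
|y^{-1}(A_k)|\leq \delta^{-1}\int_{y^{-1}(A_k)}\det\nabla y\,dx + |\{x\in\Omega:\det\nabla y(x)<\delta\}|.
\]
Taking $\limsup_{k\to\infty}$ and then letting $\delta\to 0^+$, the positivity of $\det\nabla y$ a.e.\ forces $|y^{-1}(A_k)|\to 0$. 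Since $y^{-1}(\Omega^y)=\Omega$, this gives $|y^{-1}(O_k)|\to|\Omega|$.

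For the second claim, I would first observe that $y_k^{-1}(O_k)=\{x\in\Omega:y_k(x)\in\Omega^y\}$, since $y_k(\Omega)=\Omega^{y_k}$ makes the condition $y_k(x)\in\Omega^{y_k}$ automatic. From the proof of Lemma \ref{equicontinuouslemma}(i), the sequence $(y_k)$ is equicontinuous on compact subsets of $\Omega$, and the Ascoli--Arzel\`a argument gives $y_k\to y$ uniformly on every $K\subset\subset\Omega$. Now for any such compact $K$, $y(K)$ is compact and contained in the open set $\Omega^y$, so $d:=\dist(y(K),\partial\Omega^y)>0$. For $k$ large enough, $\sup_{x\in K}|y_k(x)-y(x)|<d/2$, hence $y_k(K)\subset\Omega^y$ and therefore $K\subset y_k^{-1}(O_k)$. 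Taking an exhaustion of $\Omega$ by compacta $K_j\uparrow\Omega$ with $|\Omega\setminus K_j|\to 0$ yields $\liminf_k|y_k^{-1}(O_k)|\geq|K_j|$, and letting $j\to\infty$ gives $\liminf_k|y_k^{-1}(O_k)|\geq|\Omega|$; the opposite bound is trivial since $y_k^{-1}(O_k)\subset\Omega$.

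The main obstacle lies in the first claim, where one must correctly combine the area formula, the Lusin $N$ property, and the a.e.\ positivity of $\det\nabla y$ to upgrade the $L^1$ smallness of $\det\nabla y$ on $y^{-1}(A_k)$ to actual measure smallness of $y^{-1}(A_k)$ itself. The second claim is by contrast almost immediate, and I note that it does not require the equiintegrability hypothesis or the area formula for the maps $y_k$: the pure uniform convergence on compacta suffices.
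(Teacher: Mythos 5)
Your proof is correct, but it takes a route noticeably different from the paper's, and in one half genuinely simpler. For the first limit the paper changes variables with the inverse map: it writes $|y^{-1}(O^k)|=\int_{O^k}\det\nabla y^{-1}\,d\xi$ and reduces the claim to absolute continuity of the Lebesgue integral for $\det\nabla y^{-1}\in L^1(\Omega^y)$ combined with $|\Omega^y\setminus O^k|\to 0$; you instead push forward with the area formula for $y$ itself and compensate for the lack of integrability of $1/\det\nabla y$ by a Chebyshev splitting together with $\det\nabla y>0$ a.e. Both are sound and comparable in length. The real divergence is in the second limit. The paper again changes variables, obtains $|y_k^{-1}(O^k)|=|\Omega|-\int_{\Omega^{y_k}\setminus\Omega^y}\det\nabla y_k^{-1}\,d\xi$, and then crucially invokes the equi-integrability of the family $(\det\nabla y_k^{-1})_k$, a nontrivial fact from \cite[Lemma~5.1]{GKMS} resting on \cite{OT} and the uniform $L^q$ bound on the distortion. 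You bypass this machinery entirely: since $y_k^{-1}(O^k)=\{x\in\Omega:\ y_k(x)\in\Omega^y\}$, the locally uniform convergence $y_k\to y$ already established in the proof of Lemma~\ref{equicontinuouslemma} shows that every compact $K\subset\subset\Omega$ is eventually contained in $y_k^{-1}(O^k)$, and an exhaustion of $\Omega$ finishes the argument. This is more elementary and, as you note, dispenses with the area formula for $y_k$, the integrability of $\nabla y_k^{-1}$, and the equi-integrability input for that half of the claim. In fact, the same observation applied to $y^{-1}(O^k)=\{x\in\Omega:\ y(x)\in\Omega^{y_k}\}$ together with Lemma~\ref{equicontinuouslemma}(i) would have given you the first limit as well without any area formula; you chose a hybrid, which is perfectly fine. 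The only point worth flagging is that your invocation of \cite[Lemma~4.13]{HK} to get $\det\nabla y>0$ a.e.\ implicitly uses $K_y\in L^q$ (hence $y^{-1}\in W^{1,n}$, so $y$ satisfies Lusin $N^{-1}$); this follows from the uniform $L^q$ bound on $K_{y_k}$ via weak lower semicontinuity as in Lemma~\ref{init}, and the paper makes essentially the same tacit step, so this is not a gap.
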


\begin{proof}
%
%
	We preliminarily observe that $\nabla y^{-1}\in L^n(\Omega^y;\mathbb R^n)$ and $\nabla y_k^{-1}\in L^n(\Omega^{y_k};\mathbb R^n)$ for any $k\in\mathbb N$. This follows from the $L^q$-integrability of the distorsion, see \cite{HKM}. In particular, $\det\nabla y^{-1}\in L^1(\Omega^y)$ and $\det\nabla y_k^{-1}\in L^1(\Omega^{y_k})$. 
	Moreover, since $y$, $y_k$ are homeomorphisms, we have $\det\nabla y>0$ a.e. in $\Omega^y$ and $\det\nabla y_k^{-1}>0$ a.e. in $\Omega^{y_k}$ for any $k\in \mathbb N$ and then $y, y_k$ satisfy the Lusin condition $N^{-1}$, see \cite[Theorem 4.13]{HK}. In particular, $y^{-1}$ satisfies the Lusin condition $N$ so that the area formula holds (with equality) and entails
	\begin{equation}\label{twoareas}
	|\Omega|=|y^{-1}(\Omega^y)|=\int_{\Omega^y}\det\nabla y^{-1}\,d\xi,\qquad|y^{-1}(O^k)|=\int_{O^k}\det\nabla y^{-1}\,d\xi,\quad k\in\mathbb N.
	\end{equation}
	Since  $|\O^y\setminus O^k|\to 0$ by Lemma \ref{equicontinuouslemma},
	we get from \eqref{twoareas} as $k\to\infty$
	$$
	|y^{-1}(O_k)|=\int_{O^k}\det\nabla y^{-1}\,\d\xi\to \int_{\O^y}\det\nabla y^{-1}\,\d\xi=|\O|.
	$$
	With the same change of variables for $y_k^{-1}$ that satisfies the Lusin condition $N$ we get
	\begin{align*}
	|y_k^{-1}(O_k)|&=\int_{O^k}\det\nabla y_k^{-1}\,\d\xi=\int_{\O^{y_k}}\det \nabla y_k^{-1}\,\d\xi-\int_{\O^{y_k}\setminus O^k}\det\nabla y_k^{-1}\,\d\xi\\&
	=|\O|-\int_{\O^{y_k}\setminus \O^y}\det\nabla y_k^{-1}\,\d\xi.
	\end{align*}
	Thanks to the results in \cite{OT},
	the uniform bound on $\|K_{y_k}\|_{L^q(\Omega)}$ yields the equi-integrability of the family $(\det\nabla y_k^{-1})_{k}$, as proven in \cite[Lemma 5.1]{GKMS}.
	Since  $ |\O^{y_k}\setminus \O^y|\to 0$ as $k\to\infty$ by Lemma \ref{equicontinuouslemma}, the statement follows.
\end{proof}

\begin{lemma}[Convergence of the reference phases]\label{lem:1}
Let $p\ge n$, $q>n-1$.
Suppose that
\begin{itemize}
\item[i)] $(y_k)_k\subset W^{1,p}(\Omega;\mathbb R^n)$, $y\in W^{1,p}(\Omega;\mathbb R^n)$ are homeomorphisms of finite distorsion such that $y_k\to y$ weakly in $W^{1,p}(\Omega;\mathbb R^n)$ as $k\to\infty$,
\item[ii)]$\sup_{k\in\mathbb N}\|K_{y_k}\|_{L^q(\Omega)}<+\infty$  and the sequence $(\det\nabla y_k)_k$ is equiintegrable on $\Omega$, \KKK
\item[iii)] $(\zeta_k)_k\subset L^\infty(\Omega^{y_k};\R^h\KKK)$, $\zeta\in L^\infty(\Omega^y)$ and $\|\zeta_k-\zeta\|_{L^1(\Omega^y\cap\Omega^{y_k})}\to 0 $ as $k\to\infty$,
\item[iv)] $|\zeta_k(\xi)|\le M$ holds a.e. in $\Omega^{y_k}$, for any $k\in\mathbb N$.
\end{itemize}	
Then $K_y\in L^q(\Omega)$, $|\zeta(\xi)|\le M$ a.e. in $\Omega^y$ and
$
\|\zeta_k\circ y_k-\zeta\circ y\|_{L^1(\Omega)}\to 0
$ as $k\to\infty$. 
	\end{lemma}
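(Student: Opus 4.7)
The first claim $K_y\in L^q(\Omega)$ is immediate from Lemma~\ref{init}: its hypotheses are exactly those assumed here, and $y$ is non-constant since it is a homeomorphism. For the second claim, fix any open $A\subset\subset\Omega^y$; by Lemma~\ref{equicontinuouslemma}\,i) we have $A\subset\Omega^{y_k}$ for large $k$, so assumption iii) gives $\zeta_k\to\zeta$ in $L^1(A)$ and, along a subsequence, pointwise a.e.\ on $A$. The bound $|\zeta_k|\le M$ passes to the limit, and since $A$ is arbitrary, $|\zeta|\le M$ a.e.\ in $\Omega^y$.

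For the main claim, I first extract a subsequence so that $y_k\to y$ a.e.\ on $\Omega$, using compactness of the Sobolev embedding. Fix $\varepsilon>0$. Since $\det\nabla y^{-1}\,d\xi$ is a finite positive measure on $\Omega^y$ (of total mass $|\Omega|$, by the area formula for $y^{-1}$, which satisfies Lusin's condition $N$), I can choose $\tilde\zeta\in C_c(\Omega^y;\mathbb R^h)$ with $|\tilde\zeta|\le M$ and $\int_{\Omega^y}|\zeta-\tilde\zeta|\det\nabla y^{-1}\,d\xi<\varepsilon$. Extending $\tilde\zeta$ by zero, the triangle inequality yields
\begin{equation*}
|\zeta_k\circ y_k-\zeta\circ y|\le |\zeta_k\circ y_k-\tilde\zeta\circ y_k|+|\tilde\zeta\circ y_k-\tilde\zeta\circ y|+|\tilde\zeta\circ y-\zeta\circ y|.
\end{equation*}
Integrated on $\Omega$, the third term equals $\int_{\Omega^y}|\tilde\zeta-\zeta|\det\nabla y^{-1}<\varepsilon$ by change of variables, while the middle one vanishes by dominated convergence (continuity of $\tilde\zeta$, a.e.\ convergence $y_k\to y$, bound $|\tilde\zeta|\le M$).

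The first term, after change of variables via $y_k^{-1}$ (which satisfies Lusin's condition $N$), equals $\int_{\Omega^{y_k}}|\zeta_k-\tilde\zeta|\det\nabla y_k^{-1}\,d\xi$, which I split over $\Omega^{y_k}\setminus\Omega^y$ and $O^k=\Omega^y\cap\Omega^{y_k}$. The first piece is bounded by $M\int_{\Omega^{y_k}\setminus\Omega^y}\det\nabla y_k^{-1}$ and vanishes because $|\Omega^{y_k}\setminus\Omega^y|\to 0$ (Lemma~\ref{equicontinuouslemma}) while $(\det\nabla y_k^{-1})$ is equiintegrable---a fact already invoked in the proof of Lemma~\ref{equilemma} via \cite[Lemma~5.1]{GKMS}. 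On $O^k$, the triangle inequality further splits into $\int_{O^k}|\zeta_k-\zeta|\det\nabla y_k^{-1}$ and $\int_{O^k}|\zeta-\tilde\zeta|\det\nabla y_k^{-1}$. The first is handled by a Vitali split: for any $\eta>0$, $\|\zeta_k-\zeta\|_{L^1(O^k)}\to 0$ and Chebyshev give $|\{|\zeta_k-\zeta|>\eta\}|\to 0$, and one bounds using $|\zeta_k-\zeta|\le 2M$ together with equiintegrability on the bad set, plus $\eta|\Omega|$ on the good set.

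The genuine difficulty is the remaining piece, where a fixed bounded integrand is paired with the varying densities $\det\nabla y_k^{-1}$. My plan is to prove that $\chi_{\Omega^{y_k}}\det\nabla y_k^{-1}\rightharpoonup\chi_{\Omega^y}\det\nabla y^{-1}$ weakly in $L^1(\mathbb R^n)$. Since all $L^1$-norms equal $|\Omega|$ and the equiintegrability is uniform, Dunford--Pettis gives weak sequential $L^1$-compactness. Any weak limit is identified by testing against $f\in C_c(\mathbb R^n)$: the change of variables and dominated convergence yield
\begin{equation*}
\int_{\mathbb R^n} f\,\chi_{\Omega^{y_k}}\det\nabla y_k^{-1}\,d\xi=\int_\Omega f\circ y_k\,dx\to\int_\Omega f\circ y\,dx=\int_{\mathbb R^n}f\,\chi_{\Omega^y}\det\nabla y^{-1}\,d\xi,
\end{equation*}
forcing the limit to be $\chi_{\Omega^y}\det\nabla y^{-1}$ uniquely, so the whole sequence converges weakly in $L^1$. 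Pairing this convergence with the bounded measurable test function $|\zeta-\tilde\zeta|\chi_{\Omega^y}$ gives $\int_{O^k}|\zeta-\tilde\zeta|\det\nabla y_k^{-1}\to\int_{\Omega^y}|\zeta-\tilde\zeta|\det\nabla y^{-1}<\varepsilon$. Collecting all estimates and letting $\varepsilon\to 0$ concludes. The main obstacle is precisely this last step: transferring information across the moving domains $\Omega^{y_k}$ with varying Jacobians, which is where equiintegrability of $(\det\nabla y_k^{-1})_k$ and the measure-uniqueness identification are essential.
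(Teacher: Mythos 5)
Your proof is correct, and most of the scaffolding (change of variables via $y_k^{-1}$, equi\-integrability of $(\det\nabla y_k^{-1})_k$ from \cite[Lemma 5.1]{GKMS}, a continuous compactly supported approximant of $\zeta$, uniform convergence of $y_k\to y$ on compacts) coincides with the paper's; the key step, however, is handled by a genuinely different device. The paper chooses $\bar\zeta_\delta$ so that $\{|\bar\zeta_\delta-\zeta|>\delta\}$ has small \emph{Lebesgue measure}; then every term paired with the varying densities $\det\nabla y_k^{-1}$ is controlled by equi-integrability plus a small bad set, and no limit passage in the Jacobians is ever needed. You instead choose $\tilde\zeta$ so that $\int_{\Omega^y}|\zeta-\tilde\zeta|\det\nabla y^{-1}<\varepsilon$, which settles the fixed-$y$ term at once but leaves $\int_{O^k}|\zeta-\tilde\zeta|\det\nabla y_k^{-1}$, which you close by establishing $\chi_{\Omega^{y_k}}\det\nabla y_k^{-1}\rightharpoonup\chi_{\Omega^y}\det\nabla y^{-1}$ weakly in $L^1(\mathbb R^n)$ via Dunford--Pettis and identification against $C_c(\mathbb R^n)$. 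This is a clean and somewhat more conceptual route, but note that Dunford--Pettis on the unbounded domain $\mathbb R^n$ also requires tightness, which you do not explicitly check; it does hold, since for $K\subset\subset\Omega$ one has $\int_{\Omega^{y_k}\setminus y_k(K)}\det\nabla y_k^{-1}=|\Omega\setminus K|$ and $y_k(K)$ is uniformly bounded by the uniform convergence on compacts, but the verification belongs in the argument. Also, invoking Lemma~\ref{init} for $K_y\in L^q$ presumes $y_k\in\mathbb Y$, i.e.\ that the homeomorphisms $y_k$ satisfy Ciarlet--Ne\v cas and $\det\nabla y_k>0$ a.e.; both are automatic here (via the area formula with equality and \cite[Lemma 4.13]{HK}), while the paper simply appeals to lower semicontinuity of the distortion. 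Neither is a gap, but both deserve a sentence.
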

\begin{proof}
We use the notations $O^k:=\Omega^y\cap\Omega^{y_k}$ and $E_k:=y^{-1}(O^k)\cap y_k^{-1}(O^k)$. 
Since $y^{-1}(O_k)\subset \O $ and  $ y_k^{-1}(O_k)\subset \O$,
 in order to prove that $|\Omega\setminus E_k|\to 0$ as $k\to\infty$  it is sufficient to show
	$
	|y^{-1}(O_k)|\to |\O|$ and $ |y_k^{-1}(O_k)|\to |\O|,
	$
which are in turn proven in Lemma \ref{equilemma}.

Let us  prove that $|\zeta|\le M$ a.e. in $\Omega^y$. Indeed, suppose not and let $B:=\{\xi\in\Omega^y:|\zeta(\xi)|>M\}$ so that $|B|>0$. Then there exists $\eps>0$ and $B'\subset B$  such that $|B'|>|B|/2$ and $|\zeta|>M+\eps$ a.e. in $B'$. By assumption iv), this implies $|\zeta_k(\xi)-\zeta(\xi)|>\eps$ a.e. on $B'$ for any $k$. Let $A\subset\subset\Omega^y$ be an open set such that $|\Omega^y\setminus A|<|B|/4$, so that $|A\cap B'|>|B|/4$. Therefore, $\|\zeta_k-\zeta\|_{L^1(A)}\ge \|\zeta_k-\zeta\|_{L^1(A\cap B')}\ge \eps |B|/4$ for any $k$.
 On the other hand, for  $k$ large enough we have $A\subset\subset \Omega^{y_k}$ by Lemma \ref{equicontinuouslemma}, hence assumption iii) implies that $\|\zeta_k-\zeta\|_{L^1(A)}$ goes to zero as $k\to\infty$, a contradiction.
 
 As seen in the proof of Lemma \ref{equilemma}, we have $\det\nabla y>0$ a.e. in $\Omega^y$ and $\det\nabla y_k^{-1}>0$ a.e. in $\Omega^{y_k}$ for any $k\in \mathbb N$. Then, the property $K_y\in L^q(\Omega)$ follows by the polyconvexity of the optimal distorsion function on the set of matrices of positive determinant.
 
 Next we prove the convergence of reference phases $
\|\zeta_k\circ y_k-\zeta\circ y\|_{L^1(\Omega)}\to 0
$ as $k\to\infty$. We clearly bound such norm by $2M|\Omega\setminus E_k| +\|\zeta_k\circ y_k-\zeta\circ y\|_{L^1(E_k)}$, therefore we are reduced to prove
 that $\|\zeta_k\circ y_k-\zeta\circ y\|_{L^1(E_k)}$ goes to zero as $k\to\infty$.
 The argument is similar to the one of \cite[Lemma 5.3]{GKMS}. Indeed, there holds $\|\zeta_k\circ y_k-\zeta\circ y\|_{L^1(E_k)}\le I_k+J_k$, where
 \[
 I_k:=\|\zeta_k\circ y_k-\zeta\circ y_k\|_{L^1(E_k)},\qquad J_k:= \|\zeta\circ y_k-\zeta\circ y\|_{L^1(E_k)}.
 \]
 
 About $I_k$, 
  since $y_k^{-1}$ satisfies the Lusin condition $N$  we may  change  variables as done in the proof of Lemma \ref{equilemma}  and obtain
  \begin{equation}\label{ug}
 I_k=\int_{E_k^{y_k}}|\zeta_k(\xi)-\zeta(\xi)|\det\nabla y_k^{-1}(\xi)\,d\xi.
 \end{equation}
 We  fix a small value $\delta>0$, and since $E_k^{y_k}\subset O_k$, by \eqref{ug} we have
 \begin{equation}\label{ike}
 I_k\le \int_{O^k}\det\nabla y_k^{-1}|\zeta_k-\zeta|\,d\xi\le\delta \int_{O_k\setminus A_k(\delta)}\det\nabla y_k^{-1}\,d\xi+2M\int_{A_k(\delta)}\det\nabla y_k^{-1}\,d\xi,
 \end{equation}
 where $A_k(\delta):=\{\xi\in O^k:|\zeta_k(\xi)-\zeta(\xi)>\delta|\}$. Notice that $$\delta|A_k(\delta)|\le \int_{A_k(\delta)}|\zeta_k-\zeta|\,d\xi\le \int_{O_k}|\zeta_k-\zeta|\,d\xi,$$ so that assumption iii) yields $|A_k(\delta)|\to 0$ as $k\to\infty$. We deduce that
 \[
 \lim_{k\to\infty} \int_{A_k(\delta)}\det\nabla y_k^{-1}\,d\xi=0,
 \]
 thanks to the equi-integrability property of $\nabla y_k^{-1}$ from  \cite[Lemma 5.1]{GKMS}. Inserting this in \eqref{ike} we get
 \begin{equation*}\label{ikappa}
 \limsup_{k\to0}I_k\le \limsup_{k\to 0} \,\delta \int_{O_k\setminus A_k(\delta)}\det\nabla y_k^{-1}\,d\xi\le \delta |\Omega|,
 \end{equation*}
 where we changed back variables and used $y_k^{-1}(O_k\setminus A_k(\delta))\subset\Omega$. 
 
 Concerning $J_k$, Let $\bar\zeta_\delta$ be a continuous compactly supported function in $\Omega^y$ such that $|\bar\zeta_\delta|\le M$ and such that $|\bar A_\delta|<\delta$, where $\bar A_\delta:=\{\xi\in \Omega^y:|\bar\zeta_\delta(\xi)-\zeta(\xi)|>\delta\}$. For instance, we may may take a mollification of the restriction of $\zeta$ to a large enough open set compactly contained in $\Omega^y$. We write $J_k=J_k^{(1)}+J_k^{(2)}+J_k^{(3)}$, where
 \begin{align*}
	J_k^{(1)}=\|\z\circ y_k-\overline{\z}_\delta\circ y_k\|_{L^1(E_k)},
	J_k^{(2)}=\|\overline{\z}_\delta\circ y_k-\overline{\z}_\delta\circ y\|_{L^1(E_k)},
	J_k^{(3)}=\|\overline{\z}_\delta\circ y-\z\circ y\|_{L^1(E_k)}.
	\end{align*}
Here, $J_k^{(1)}$ and $J_k^{(3)}$ can be treated exactly as $I_k$ by change of variables, and with the help of assumption iv) we have for any $k\in\mathbb N$
\begin{equation}\label{unoetre}
J_k^{(1)}\le \delta|\Omega|+2M\int_{\bar A_\delta}\det\nabla y_k^{-1}\,d\xi,\quad J_k^{(3)}\le \delta|\Omega|+2M\int_{\bar A_\delta}\det\nabla y^{-1}\,d\xi.
\end{equation}
 On the other hand, if $A\subset\subset \Omega$ is an open set such that $|\Omega\setminus A|<\delta$, we have
  \begin{equation}\label{addue}
 J_k^{(2)}\le 2M\delta+\int_A|\bar\zeta_\delta\circ y_k-\bar\zeta_\delta\circ y|\,dx\le 2M\delta+|\Omega|\,\sup_{x\in A}\,\omega_\delta(|y_k(x)-y(x)|)
 \end{equation}
 where $\omega_\delta$ is the modulus of continuity of $\bar\zeta_\delta$. Taking the limit as $k\to\infty$ in \eqref{addue}, since by Lemma \ref{equicontinuouslemma} we have uniform convergence of $y_k$ to $y$ in $A$, we get
 \begin{equation}\label{finaldue}
 \limsup_{k\to\infty} J_k^{(2)}\le 2M\delta.
 \end{equation} 
 By \eqref{unoetre}, \eqref{finaldue}, the equi-integrability of $\det\nabla y_k^{-1}$, the integrability of $\det\nabla y^{-1}$, by $|\bar A_\delta|<\delta$ and  the arbitrariness of $\delta$, we conclude that $J_k\to0$ as $k\to\infty$.
 \end{proof}

\section{\MK  lower bound \EOR }\label{sec:liminf}
%
%
\UUU This section collects some lower semicontinuity arguments, leading to the proof of the $\Gamma$-$\liminf$
inequality, namely Proposition
\ref{theo:liminf}. We start by a lower semicontinuity property of interfacial measures (see Definition \ref{ifmeas}). 
\EEE 

\begin{proposition}[Double lower semicontinuity of $\bp_{y,g} $]\label{lsemicontinuity}
 Let $(y_k)_k\subset W^{1,p}(\Omega;\mathbb R^n)$, $y\in W^{1,p}(\Omega;\mathbb R^n)$ be  homeomorphisms of finite distorsion \KKK
  such that $y_k\to y$ weakly in $W^{1,p}(\O;\mathbb R^n)$,
for $p\ge n$.  Let $(g_k)_k\subset  L^r_{loc}(\Omega)$, $g\in L^r_{loc}(\Omega)$, $r\in[\tfrac{p}{p-n},+\infty)$, be such that $g_k\to g $ strongly in $L^r_{loc}(\Omega)$.   If $\liminf_{k\to+\infty}|p_{y_k,g_k}|(\O)<\infty$, then there exists  $ \bp_{y,g}\in\mathcal M(\O;\mathbb R^n)$ satisfying \eqref{eq:p_yg-1} and
\begin{equation}\label{2min}
|\bp_{y,g}|(\O)\leq \liminf_{k\to+\infty}|p_{y_k,g_k}|(\O).
\end{equation}
\end{proposition}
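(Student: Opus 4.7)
Pass to a subsequence (not relabeled) along which $|\bp_{y_k,g_k}|(\Omega) \to L := \liminf_{k\to\infty} |\bp_{y_k,g_k}|(\Omega) < \infty$. Since $\{\bp_{y_k,g_k}\}_k$ is then bounded in $\mathcal{M}(\Omega;\R^n)$, by Banach--Alaoglu we can extract a further subsequence weakly-$*$ converging to some $\mu \in \mathcal{M}(\Omega;\R^n)$, and weak-$*$ lower semicontinuity of the total variation yields $|\mu|(\Omega) \le L$. The plan is to show that $\mu$ itself satisfies the defining relation \eqref{eq:p_yg-1} for the couple $(y,g)$, so that we may identify $\mu = \bp_{y,g}$ and obtain \eqref{2min}.

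To this end, fix an arbitrary $\psi \in C^\infty_c(\Omega;\R^n)$ and pass to the limit in
\[
\int_\Omega g_k\, \cof(\nabla y_k) : \nabla \psi \,\d x = \int_\Omega \psi \cdot \d \bp_{y_k,g_k}.
\]
The right-hand side tends to $\int_\Omega \psi \cdot \d\mu$ directly by the weak-$*$ convergence, since $\psi \in C^0_c(\Omega;\R^n)$. For the left-hand side, the plan is to combine the strong convergence $g_k \to g$ in $L^r(\mathrm{supp}\,\psi)$ with the weak convergence of the minors. Indeed, from $y_k \weak y$ in $W^{1,p}(\Omega;\R^n)$ with $p \ge n$, the classical weak continuity of minors gives $\cof \nabla y_k \weak \cof \nabla y$ weakly in $L^{p/(n-1)}(\Omega;\R^{n\times n})$. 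Setting $a = p/(n-1)$ and $a' = p/(p-n+1)$, one checks that the hypothesis $r \ge p/(p-n) \ge a'$ together with boundedness of $\mathrm{supp}\,\psi$ guarantees that $g_k \nabla\psi \to g\nabla\psi$ strongly in $L^{a'}(\mathrm{supp}\,\psi; \R^{n\times n})$, and the strong--weak pairing yields
\[
\int_\Omega g_k\, \cof(\nabla y_k) : \nabla \psi \,\d x \;\longrightarrow\; \int_\Omega g\, \cof(\nabla y) : \nabla \psi \,\d x.
\]
Thus $\int_\Omega g\, \cof(\nabla y) : \nabla \psi \,\d x = \int_\Omega \psi \cdot \d\mu$ for every $\psi \in C^\infty_c(\Omega;\R^n)$, so $\mu$ is an interfacial measure for $(y,g)$ in the sense of Definition \ref{ifmeas}, and \eqref{2min} follows from the lower semicontinuity of $|\cdot|(\Omega)$ already observed.

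The main obstacle is the justification of the passage to the limit in the bulk integral: one must reconcile the weak-type convergence of the cofactor (which requires $p \ge n$ together with the usual weak-continuity-of-minors machinery) with the strong convergence of $g_k$ at the integrability exponent dictated by $r \ge p/(p-n)$. The delicate case is $p = n$, where $p/(p-n) = \infty$ so that $g_k \to g$ in $L^\infty_{\mathrm{loc}}$ is required; in that case the cofactor is only bounded in $L^{n/(n-1)}$ and one essentially tests it against a uniformly bounded, strongly convergent function with compact support, which is still enough. All remaining steps (weak-$*$ compactness of measures, lower semicontinuity of total variation, identification of the limit) are standard.
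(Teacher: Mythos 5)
Your proof is correct and follows essentially the same route as the paper: weak continuity of the cofactor in $L^{p/(n-1)}$, strong convergence of $g_k$ against the weak limit, and lower semicontinuity of the total variation. The only cosmetic difference is the preliminary Banach--Alaoglu extraction of a weak-$*$ cluster point $\mu$, which the paper bypasses (the limiting distribution is already uniquely determined by the bulk integral, so one can directly define $\bp_{y,g}(\psi):=\int_\Omega g\,\cof\nabla y:\nabla\psi\,\d x$ and invoke lower semicontinuity of the total variation to conclude it is a finite measure); your extra exponent bookkeeping is a welcome sanity check of the hypothesis on $r$.
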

\begin{proof}
Since $\nabla y_k\to
\nabla y$ weakly in  $L^p(\O)$, the convergence 
$
\cof\nabla y_k\to\cof \nabla y
$  holds  weakly in  ${L^{p/(n-1)}}(\O)$. 
Therefore, for any test function $\psi\in C^\infty_{\rm c}(\O;\R^3)
$, as $k\to\infty$ we have, 
$$
\int_{\O}\psi\cdot \d \bp_{y_k,g_k}=\int_{\O}g_k\cof\nabla y_k:\nabla\psi\,\d x\to \int_{\O}g\,\cof\nabla y:\nabla\psi\,\d x=: \bp_{y,g}(\psi),
$$
by weak-times-strong convergence; the last equality is a definition of the distribution on
the right side. By the lower semicontinuity of the total
variation,  we have that $\bp_{y,g}\in\mathcal M(\O;\mathbb R^n)$ and \eqref{2min} holds. 
\end{proof}

\begin{lemma}[Lower semicontinuity of bulk energy] \label{lem:lsc-Fbulk} 
	 Let assumptions \eqref{lscassumption} and \eqref{W-ass2} hold. Let $R>0$, let $(y,\zeta)\in\widetilde{\mathbb Q}^R$ and let the sequence $(y_k,\zeta_k)_k\subset\widetilde\bbQ^R$ be such that \KKK
	\begin{itemize}
		\item[i)] $y_k\to y$ weakly in $W^{1,p}(\O;\Rz^n)$, 
		\item [ii)]  $\lim_{k\to+\infty}\|\zeta_k- {\zeta}\|_{L^1(O^k)}=0 $, with $ O^k:=\O^{y_k}\cap \O^y$.
	\end{itemize}
	Then,
	$\;\;
		\mathcal{F}^{\,\rm bulk}(y,\z)\le\displaystyle 	\liminf_{k\to\infty}  \mathcal{F}^{\,\rm bulk}(y_k,\z_k).
	$ 
\end{lemma}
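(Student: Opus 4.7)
The plan is as follows. First, I would reduce to a subsequence along which the liminf on the right-hand side is finite (otherwise the inequality is trivial) and is in fact a limit, so that $\sup_k\mathcal{F}^{\rm bulk}(y_k,\zeta_k)<\infty$. Since $(y,\zeta),(y_k,\zeta_k)\in\widetilde{\mathbb Q}^R$, both $y$ and each $y_k$ are homeomorphisms of finite distorsion and $|\zeta|,|\zeta_k|\le R$ on the respective deformed configurations. The coercivity assumption \eqref{W-ass2} combined with the energy bound then yields uniform control of $\|\nabla y_k\|_{L^p(\Omega)}$, of $\|\det\nabla y_k\|_{L^r(\Omega)}$ for some $r>1$, and of $\|K_{y_k}\|_{L^q(\Omega)}$; in particular, $(\det\nabla y_k)_k$ is equiintegrable on $\Omega$ by the de la Vall\'ee--Poussin criterion.

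Next, I would invoke Lemma~\ref{lem:1} with $M=R$, all of whose hypotheses have now been verified in combination with i) and ii) of the statement. This yields $K_y\in L^q(\Omega)$, $|\zeta|\le R$ a.e., and, crucially, the strong convergence $\zeta_k\circ y_k\to \zeta\circ y$ in $L^1(\Omega)$. Extracting a further, not relabeled subsequence, we may assume $\zeta_k\circ y_k\to\zeta\circ y$ pointwise almost everywhere in $\Omega$. Together with the weak $W^{1,p}$ convergence of the $y_k$, Ball's weak-continuity theorem for minors \cite{Ball-1977} gives $\cof\nabla y_k\weakto \cof\nabla y$ in $L^{p/(n-1)}(\Omega;\mathbb R^{n\times n})$ and $\det\nabla y_k\weakto\det\nabla y$ in $L^r(\Omega)$.

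To conclude, I would appeal to the classical lower-semicontinuity theorem for polyconvex integrands with a measurable parameter converging in measure. Writing $W(F,z)=w(\mathrm{minors}(F),z)$ with $w$ convex in its minor-arguments and jointly lower semicontinuous (as granted by \eqref{lscassumption}), Ioffe's theorem (see e.g.\ \cite{dacorogna}) applied to the sequence of integrals $\int_\Omega w(\mathrm{minors}(\nabla y_k),\zeta_k\circ y_k)\,dx$, in view of the weak convergence of the minors and of the a.e.\ convergence of $\zeta_k\circ y_k$, directly yields
$$
\mathcal{F}^{\rm bulk}(y,\zeta)\le \liminf_{k\to\infty}\mathcal{F}^{\rm bulk}(y_k,\zeta_k).
$$
The main obstacle is the strong $L^1$-convergence of $\zeta_k\circ y_k$ to $\zeta\circ y$, which is delicate because $\zeta_k$ and $\zeta$ are defined on the different (and only a.e.-close) domains $\Omega^{y_k}$ and $\Omega^y$; this is however precisely the content of Lemma~\ref{lem:1}, after which the remainder of the argument reduces to a standard polyconvex lower-semicontinuity result.
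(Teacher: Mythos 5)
Your proposal is correct and follows essentially the same route as the paper: both reduce to a finite-energy subsequence, use the coercivity \eqref{W-ass2} to verify the hypotheses of Lemma~\ref{lem:1}, pull the phase field back to the reference configuration via $z_k:=\zeta_k\circ y_k\to\zeta\circ y$ in $L^1(\Omega)$, and then close with a polyconvex lower-semicontinuity theorem. The only cosmetic difference is the final citation: the paper invokes \cite[Corollary~7.9]{fonseca-leoni} as a black box, whereas you unpack it into Ball's weak continuity of minors plus Ioffe's theorem, which amounts to the same content (and, if one wants to be pedantic at the borderline $p=n$, the equiintegrability of $\det\nabla y_k$ coming from the $(\det F)^r$, $r>1$, term in \eqref{W-ass2} is indeed what makes the weak $L^1$ convergence of the minors work, as you note).
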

\begin{proof}
\EM We may assume that along a not relabeled subsequence $\sup_{k\in\mathbb N}  \mathcal{F}^{\,\rm bulk}(y_k,\z_k)<+\infty$. Thanks to the coercivity assumption \eqref{W-ass2}, the hypotheses of Lemma \ref{lem:1} are satisfied. \KKK
	 Letting  $z_k:=\z_k\circ y_k$,
	 Lemma~\ref{lem:1}  entails  $z_k\to z=\z\circ y$ in $L^1(\O;\R^h\KKK)$. Now, write the bulk energy functional as a function of $z$:
	 	\begin{equation*}
	 \widetilde{\mathcal{F}}^{\,\rm bulk}(y,z):={\mathcal{F}}^{\,\rm bulk}(y,z\circ y^{-1})=\int_\Omega 
	 W(\nabla y(x),z(x))\,\d x,
	 \end{equation*}
	 Since $ W(\cdot,\cdot)$ is lower semicontinuous in $\mathbb R^{n\times n}\times \mathbb R^h $ and is  poly-convex in the first argument, we can apply the result \cite[Corollary 7.9]{fonseca-leoni}, getting
    $\liminf_{k\to\infty}
	 \widetilde{\mathcal{F}}^{\,\rm bulk}(y_k,z_k)\ge
	 \widetilde{\mathcal{F}}^{\,\rm bulk}(y,z),$
	 which proves the claim. 
\end{proof}
\EEE

In the following, \EM we recall that $\Phi:\mathbb R^h\to\mathbb R^+$ is a continuous potential that vanishes only at  the points of $P$, and that \eqref{dcoeff} holds.
We take advantage of the following inequality, for a proof see \cite[Proposition 2.1]{B}. 

\begin{proposition}\label{baldo2.1}
\EM For any $\alpha\in\{1,\ldots, m\}$,  let $\varphi_\alpha:\mathbb R^h\to\mathbb R$ be defined by $\varphi_\alpha(z):=d_\Phi(p_\alpha,z)$, where the $p_\alpha$'s are the zeros of $\Phi$. Let $u\in W^{1,2}(\Omega;\mathbb{R}^{  h })\cap L^{\infty}(\Omega;\mathbb{R}^h)$. 
\EM
Then $\varphi_\alpha\circ u \in W^{1,2}(\Omega)$ and for any open set $A\subseteq \Omega$ there \KKK holds 
\[
\int_A |\nabla(\varphi_\alpha\circ u)|\le\int_ A \sqrt{\KKK\Phi}\circ u\, |\nabla u|.
\]
\end{proposition}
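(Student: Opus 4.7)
The plan is to derive a pointwise gradient bound on $\varphi_\alpha$ and then propagate it through the composition $\varphi_\alpha\circ u$ by means of the Sobolev chain rule for Lipschitz superpositions.

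First I would establish that $\varphi_\alpha$ is locally Lipschitz on $\mathbb{R}^h$. Since $\Phi$ is continuous (hence locally bounded), taking the straight-line path $\gamma(t)=z_1+t(z_2-z_1)$, $t\in[0,1]$, as a competitor in the definition of $d_\Phi(z_1,z_2)$ gives, together with the triangle inequality for $d_\Phi$,
$$|\varphi_\alpha(z_1)-\varphi_\alpha(z_2)|\le d_\Phi(z_1,z_2)\le |z_2-z_1|\,\sup_{[z_1,z_2]}\sqrt{2\Phi}.$$
Hence $\varphi_\alpha$ is locally Lipschitz and, by Rademacher's theorem, classically differentiable a.e.\ on $\mathbb{R}^h$.

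Next I would derive the pointwise bound $|\nabla\varphi_\alpha(z)|\le \sqrt{2\Phi(z)}$ at each point of differentiability (which matches the statement up to the harmless constant $\sqrt 2$). Specializing the previous estimate to $z_1=z$, $z_2=z+hv$ with $|v|=1$ and $h>0$ yields
$$\frac{|\varphi_\alpha(z+hv)-\varphi_\alpha(z)|}{h}\le \frac{1}{h}\int_0^h \sqrt{2\Phi(z+sv)}\,ds.$$
Letting $h\to 0^+$ and using the continuity of $\Phi$ gives $|\partial_v\varphi_\alpha(z)|\le \sqrt{2\Phi(z)}$, and taking the supremum over unit vectors $v$ yields the claim.

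Finally, since $u\in L^\infty(\Omega;\mathbb{R}^h)$, its essential image sits in a ball $B\subset\mathbb{R}^h$ on which $\varphi_\alpha$ is Lipschitz. The classical chain rule for Sobolev compositions with Lipschitz functions (Marcus--Mizel, in the form of \cite[Thm.~3.96]{AFP}) then yields $\varphi_\alpha\circ u\in W^{1,2}(\Omega)$ with
$$\nabla(\varphi_\alpha\circ u)(x)=\nabla\varphi_\alpha(u(x))\,\nabla u(x)\quad\text{for a.e.\ }x\in\Omega.$$
Combining this identity with the pointwise bound gives $|\nabla(\varphi_\alpha\circ u)|\le \sqrt{2\Phi(u)}\,|\nabla u|$ a.e.\ in $\Omega$, and integrating over any open $A\subseteq\Omega$ concludes the proof. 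The most delicate point is the very last step, where one must handle the fact that $\nabla\varphi_\alpha$ is only defined a.e.\ on $\mathbb{R}^h$: the Marcus--Mizel theorem packages precisely the measure-theoretic argument (based on $u\in W^{1,2}\cap L^\infty$ and the vanishing of $\nabla u$ on pre-images of Lebesgue-null sets) required to make the composition licit.
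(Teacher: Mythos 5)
The paper does not supply its own proof of this proposition: it simply refers the reader to Baldo's \cite[Proposition~2.1]{B}. Your argument is correct and is essentially the expected one for this statement: (1) use the straight-line competitor in the definition of $d_\Phi$ together with the triangle inequality to show $\varphi_\alpha$ is locally Lipschitz with local Lipschitz constant controlled by $\sup\sqrt{2\Phi}$; (2) pass to the difference quotient to obtain the pointwise bound $|\nabla\varphi_\alpha(z)|\le\sqrt{2\Phi(z)}$ at a.e.\ $z$; and (3) compose with $u$ via the Lipschitz--Sobolev chain rule, using $u\in L^\infty$ to restrict $\varphi_\alpha$ to a ball on which it is globally Lipschitz, and using the Marcus--Mizel/Ambrosio--Dal Maso mechanism to justify the a.e.\ chain rule even though $\nabla\varphi_\alpha$ exists only a.e. All three steps are sound, and you are right that the subtle point is exactly the last one.

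One remark worth making explicitly: your derivation yields the bound with $\sqrt{2\Phi}$, not $\sqrt{\Phi}$ as in the displayed inequality of the proposition. That is not a defect in your proof but a typo in the paper's statement: with the definition of $d_\Phi$ adopted here (using $\sqrt{2\Phi(\gamma)}\,|\gamma'|$ as the metric integrand), the correct bound is indeed $|\nabla(\varphi_\alpha\circ u)|\le\sqrt{2\Phi(u)}\,|\nabla u|$. This is confirmed by the way the proposition is invoked in the proof of Proposition~\ref{theo:liminf}, where the Young inequality $\tfrac{\eps}{2}|\nabla\zeta|^2+\tfrac1\eps\Phi(\zeta)\ge\sqrt{2\Phi(\zeta)}\,|\nabla\zeta|$ is followed by $\sqrt{2\Phi(\zeta)}\,|\nabla\zeta|\ge|\nabla(\varphi_\alpha\circ\zeta)|$ — i.e.\ the $\sqrt{2\Phi}$ version is what is actually used. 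So you have in fact caught a misprint and proved the form of the inequality that the rest of the paper relies upon.
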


Before stating the liminf inequality, we recall that for a collection $\{\mu_\alpha\}_{\alpha=1,\ldots, m}$ of positive Borel measures on $\Omega$, the supremum measure is defined on open sets $A\subseteq \Omega$ as
\begin{equation}\label{lub}
\left(\bigvee_{\alpha=1}^m\mu\right)(A):=\sup\left\{\sum_{\alpha=1}^k\mu_\alpha(A_\alpha):
  \mbox{$(A_\alpha)$ pairw. disjoint open sets,  $\displaystyle\bigcup_{\alpha=1}^m A_\alpha=A$}\right\}
\end{equation}
Equivalently, the supremum measure is the smallest positive Borel measure $\nu$ such that $\nu(A)\ge \mu_\alpha(A)$ for any $\alpha\in\{1,\ldots, m\}$ and any open set $A\subseteq\Omega$.

\EM
The theory that we developed in Section \ref{sil2} shall play a crucial role in the liminf inequality. Indeed, as we will see through the next proof, as soon as $(y,\zeta)\in\widetilde {\mathbb Q}^R$ is a state with finite energy, i.e., $\mathcal F_\eps(y,\zeta)<+\infty$,  an interfacial measure exists for the couple $(\varphi_\alpha\circ\zeta\circ y,y)$, for any $\alpha=1,\ldots,m$. This is reminiscent of the notion of admissible states from \cite{S2,S3}, which are indeed defined as those couples of deformations and phase indicators that admit a suitable interfacial measure.

\KKK

\begin{proposition}[$\Gamma$-$\liminf$ inequality]\label{theo:liminf} Let $p\ge n$, $q>n-1$.
		Let $R>\max_{\alpha\in\{1,\ldots,m\}}|p_\alpha|$, where $p_1,\ldots, p_m$ are the zeroes of $\Phi$.  Let $(y,\zeta)\in\widetilde{\mathbb Q}^R$ and let  $(y_k,\zeta_k)_k,\subset\widetilde\bbQ^R$ be a sequence such that \KKK
	\begin{itemize}
		\item [i)] $ \liminf_{k\to+\infty}  \mathcal{F}^{\,\rm
                    int}_{\eps_k}(y_k,\z_k)<\infty$ for  some  vanishing sequence $(\eps_k)_k\subset(0,+\infty)$,
		\item[ii)] $y_k\to y$ weakly in $W^{1,p}(\O;\Rz^n)$, 
		\item [iii)]  $\lim_{k\to+\infty}\|\zeta_k- {\zeta}\|_{L^1(O^k)}=0 $, with $ O^k:=\O^{y_k}\cap \O^y$.
	\end{itemize}
	Then, there exist sets of finite perimeter  $E_\alpha^y\subset \O^y$, $\alpha=1,\ldots, m$ such that
	\begin{equation}\label{uno}
	{\z }=\sum_{\alpha=1}^m p_\alpha \chi_{E_\alpha^y}\end{equation}
	and
	\begin{equation}\label{due}
	 \frac12\sum_{\alpha,\beta=1}^m d_{\alpha,\beta}\mathcal H^{n-1}(E_{\alpha,\beta}^y)\leq \liminf_{k\to+\infty}  \mathcal{F}^{\,\rm int}_{\eps_k}(y_k,\z_k),
	\end{equation}
	where $E_{\alpha,\beta}^y:=\Omega^y\cap \partial^*E_\alpha^y\cap\partial^* E_\beta^y$.
	In particular,  one has that  $(y, {\z})\in\bbQ $.
\end{proposition}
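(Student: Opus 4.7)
The plan is to follow the multi-phase Baldo strategy for $\Gamma$-convergence of Modica--Mortola-type functionals, adapted to the Eulerian setting through the interfacial-measure machinery of Section~\ref{sil2}. For each $\alpha\in\{1,\ldots,m\}$ I would introduce $v_k^\alpha:=\varphi_\alpha\circ\zeta_k\in W^{1,2}(\Omega^{y_k})\cap L^\infty$ and $v^\alpha:=\varphi_\alpha\circ\zeta\in L^\infty(\Omega^y)$, together with their pull-backs $g_k^\alpha:=v_k^\alpha\circ y_k$ and $g^\alpha:=v^\alpha\circ y$ in $L^\infty(\Omega)$.

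I would first establish \eqref{uno}. Since $\mathcal{F}^{\,\rm int}_{\eps_k}(y_k,\zeta_k)$ is bounded and $\eps_k\to 0$, $\int_{\Omega^{y_k}}\Phi(\zeta_k)\,d\xi\to 0$. Combining assumption~(iii) with Lemma~\ref{equicontinuouslemma}(i) (so that each compact $A\subset\subset\Omega^y$ is eventually contained in $\Omega^{y_k}$), a subsequence extraction plus a diagonal argument yields pointwise a.e.\ convergence $\zeta_k\to\zeta$ on $\Omega^y$. Continuity of $\Phi$ and Fatou's lemma then force $\Phi(\zeta)=0$ a.e.\ in $\Omega^y$, so $\zeta\in P$ a.e.\ and $\zeta=\sum_\alpha p_\alpha\chi_{E_\alpha^y}$ for some measurable sets $E_\alpha^y$.

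Next I would use the interfacial-measure framework to upgrade $v^\alpha$ to a $BV$ function. Proposition~\ref{baldo2.1} combined with Young's inequality gives $|\nabla v_k^\alpha|(\Omega^{y_k})\le\mathcal{F}^{\,\rm int}_{\eps_k}(y_k,\zeta_k)$, and Theorem~\ref{reverse} produces the interfacial measure $\bp_{y_k,g_k^\alpha}\in\mathcal{M}(\Omega;\mathbb{R}^n)$ with $|\bp_{y_k,g_k^\alpha}|(\Omega)=|\nabla v_k^\alpha|(\Omega^{y_k})$. Lemma~\ref{lem:1} provides $\zeta_k\circ y_k\to\zeta\circ y$ in $L^1(\Omega)$, whence by continuity of $\varphi_\alpha$ and the uniform $L^\infty$ bound $g_k^\alpha\to g^\alpha$ in every $L^r(\Omega)$ with $r<\infty$. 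Proposition~\ref{lsemicontinuity} then yields $\bp_{y,g^\alpha}$ with $|\bp_{y,g^\alpha}|(\Omega)\le\liminf_k\mathcal{F}^{\,\rm int}_{\eps_k}(y_k,\zeta_k)$, and the reverse direction of Theorem~\ref{reverse} upgrades this to $v^\alpha\in BV(\Omega^y)$ with the same bound. Writing $\chi_{E_\alpha^y}=(m_\alpha-v^\alpha)^+/m_\alpha$ with $m_\alpha:=\min_{\beta\neq\alpha}d_{\alpha,\beta}>0$ delivers finite perimeter of each $E_\alpha^y$; thus $\zeta\in BV(\Omega^y;\mathbb{R}^h)$ and $(y,\zeta)\in\bbQ$.

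For the sharp bound \eqref{due} I would run the supremum-of-measures argument. Given pairwise disjoint open sets $A_1,\ldots,A_N\subset\subset\Omega^y$ and indices $\alpha_1,\ldots,\alpha_N$, Lemma~\ref{equicontinuouslemma}(i) gives $A_i\subset\Omega^{y_k}\cap\Omega^y=O^k$ for large $k$, so $v_k^{\alpha_i}\to v^{\alpha_i}$ in $L^1(A_i)$ by continuity of $\varphi_{\alpha_i}$ and $L^\infty$-boundedness. Local $BV$ lower semicontinuity combined with Proposition~\ref{baldo2.1} and Young's inequality gives $|\nabla v^{\alpha_i}|(A_i)\le\liminf_k\mathcal{F}^{\,\rm int}_{\eps_k}(y_k,\zeta_k;A_i)$, and summing, subadditivity of $\liminf$ together with the disjointness of the $A_i$'s yields $\sum_i|\nabla v^{\alpha_i}|(A_i)\le\liminf_k\mathcal{F}^{\,\rm int}_{\eps_k}(y_k,\zeta_k)$. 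Passing to the supremum over admissible collections, \eqref{lub} gives $\bigl(\bigvee_\alpha|\nabla v^\alpha|\bigr)(\Omega^y)\le\liminf_k\mathcal{F}^{\,\rm int}_{\eps_k}(y_k,\zeta_k)$. The conclusion \eqref{due} then follows from the identity $\bigl(\bigvee_\alpha|\nabla v^\alpha|\bigr)(\Omega^y)=\tfrac12\sum_{\alpha,\beta}d_{\alpha,\beta}\,\mathcal{H}^{n-1}(E^y_{\alpha,\beta})$, which I would obtain from $v^\alpha=\sum_\beta d_{\alpha,\beta}\chi_{E_\beta^y}$ by observing that on $E^y_{\beta,\gamma}$ the jump of $v^\alpha$ equals $|d_{\alpha,\beta}-d_{\alpha,\gamma}|$, whose supremum over $\alpha$ is $d_{\beta,\gamma}$ (attained at $\alpha=\beta$ thanks to the triangle inequality \eqref{eq:tr-inequality}).

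The main technical obstacle is the reconciliation of the varying deformed domains $\Omega^{y_k}$ with the limit $\Omega^y$: ensuring that the $L^1$-convergence of $v_k^\alpha$ is available on every compact subset of $\Omega^y$, and that the Baldo truncation and sup-of-measures construction can be carried out coherently across these moving domains. The interfacial-measure formalism of Section~\ref{sil2} is precisely what transfers the analysis to the fixed reference configuration $\Omega$, where Lemma~\ref{lem:1} supplies convergence of reference phases and Proposition~\ref{lsemicontinuity} provides the lower semicontinuity of the pulled-back measures.
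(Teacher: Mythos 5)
Your proposal is correct and follows essentially the paper's own route: the pointwise bound $\sqrt{2\Phi(\zeta_k)}\,|\nabla\zeta_k|\ge|\nabla(\varphi_\alpha\circ\zeta_k)|$ from Proposition~\ref{baldo2.1} and Young's inequality, the transfer of lower semicontinuity across the moving deformed domains via Theorem~\ref{reverse}, Lemma~\ref{lem:1}, and Proposition~\ref{lsemicontinuity}, and the identification of the supremum measure with the anisotropic perimeter functional. The only differences are minor: you rederive the identity $\bigl(\bigvee_\alpha|\nabla(\varphi_\alpha\circ\zeta)|\bigr)(\Omega^y)=\tfrac12\sum_{\alpha,\beta}d_{\alpha,\beta}\mathcal H^{n-1}(E^y_{\alpha,\beta})$ and the finite-perimeter property of each $E_\alpha^y$ from the jump computation and the triangle inequality \eqref{eq:tr-inequality}, where the paper cites \cite[Prop.~2.2]{B}; and for the supremum-of-measures step you argue locally on compact $A_i\subset\subset\Omega^y$ with classical $BV$ lower semicontinuity rather than passing through the pulled-back interfacial measures on $\Omega$ --- both are valid (your local route in fact makes the earlier invocation of Lemma~\ref{lem:1} and Proposition~\ref{lsemicontinuity} redundant, and you mean superadditivity, not subadditivity, of $\liminf$).
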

\begin{proof}
Let $F\subset\subset\Omega^y$ be open. By Lemma \ref{equicontinuouslemma} we have $F\subset\Omega^{y_k}$ for any large enough $k$. Therefore, assumption i) and Fatou lemma imply
\[
\int_{F}\Phi(\zeta)\,d\xi\le\liminf_{k\to\infty}\int_F\Phi(\zeta_k)\,d\xi\le  \liminf_{k\to\infty}\, \eps_k\int_{\Omega^{y_k}}\frac1{\eps_k}\Phi(\zeta_k)\,d\xi\le\liminf_{k\to\infty}\eps_k\,\mathcal F_{\eps_k}^{\rm int}(y_k,\zeta_k)=0.
\]
The arbitrariness of $F$ and $\Phi\ge 0$ show that $\Phi(\zeta)=0$ a.e. in $\Omega^y$.

 \KKK
For any $\alpha\in\{1,\ldots, m\}$ and any open set $A\subseteq\Omega$ (so that $A^{y_k}$ is open as well, since $y_k$ is a homeomorphism) we have by Proposition \ref{baldo2.1}
\begin{align*}
	\int_{A^{y_k}}\Big( \frac{\eps_k}2|\nabla 
	\z_k|^2+\frac 1{\eps_k} \Phi(\z_k)\Big)\,\d\xi\geq \int_{A^{y_k}}\sqrt{2\Phi(\z_k)}\,|\nabla
	\z_k|\,\d\xi\ge
	\int_{A^{y_k}}|\nabla(\varphi_\alpha\circ\zeta_k)|\,d\xi.
	\end{align*}
	Therefore, 
	\begin{equation}\label{nosup}
	\int_{\O^{y_k}}\Big( \frac{\eps_k}2|\nabla 
	\z_k|^2+\frac 1{\eps_k} \Phi(\z_k)\Big)\,\d\xi\geq \int_{\O^{y_k}}\max_{\alpha=1,\ldots,m}|\nabla(\varphi_\alpha\circ \zeta_k)|\,d\xi = \left(\bigvee_{\alpha=1}^m |\nabla(\varphi_\alpha\circ \zeta_k)|\right)(\O^{y_k}).
	\end{equation}

We have $|\zeta_k|\le R$ and we let \KKK $z_k:=\z_k\circ y_k$, thus $z_k\in L^\infty(\O;\mathbb{R}^{h})$. We  clearly have $g^\alpha_k:=\varphi_\alpha\circ z_k\in L^\infty(\O)$, 
and since $\varphi_\alpha\circ \z_k=g^\alpha_k\circ y_k^{-1}$,  by invoking Theorem \ref{reverse} we see that \begin{equation}\label{zeresima}
|\nabla(\varphi_\alpha\circ\zeta_k)|(A^{y_k})=|(y_k)_\flat(\nabla(\varphi_\alpha\circ \zeta_k))|(A)=|p_{y_k, g^\alpha_k}|(A),
\end{equation}
for any open set $A\subseteq\Omega$, where $p_{y_k, g^\alpha_k}$ is an interfacial measure.
 By Lemma \ref{lem:1} we have $z_k\to z$ strongly in $L^1(\O;\mathbb{R}^{ h\EEE})$, hence $g_k^\alpha\to g^\alpha$  strongly $L^1(\O)$. As in the proof of Proposition \ref{lsemicontinuity}, we get the weak  convergence of measures $p_{y_k,g^\alpha_k}\rightharpoonup p_{y,g^\alpha}$, which yields lower semicontinuity for any open set $A\subseteq \Omega$, i.e.\EEE
\begin{equation}\label{prima}
|p_{y, g^\alpha}|(A)\le\liminf_{k\to\infty}|p_{y_k, g^\alpha_k}|(A).
\end{equation}
By defining $g^\alpha:=\varphi_\alpha\circ z$, still by Theorem \ref{reverse} we have
\begin{equation}\label{seconda}|p_{y, g^\alpha}|(A)=|\nabla(g^\alpha\circ y^{-1})|(A^y)=|\nabla(\varphi_\alpha\circ\zeta)|(A^y).
\end{equation} 
From \eqref{zeresima}, \eqref{prima} and \eqref{seconda} we get
\begin{equation*}\label{terza}
 |\nabla(\varphi_\alpha\circ\zeta)|(A^y) \le\liminf_{k\to\infty}  |\nabla(\varphi_\alpha\circ\zeta_k)|(A^{y_k})
\end{equation*}
for any open set $A\subseteq\Omega$ and any $\alpha\in\{1,\ldots, m\}$.
By the latter semicontinuity property and the definition \eqref{lub} of supremum measure, we obtain
\begin{equation}\label{terza-1}
\left(\bigvee_{\alpha=1}^m |\nabla(\varphi_\alpha\circ\zeta)|\right)(\Omega^y) \le\liminf_{k\to\infty}\left(\bigvee_{\alpha=1}^m  |\nabla(\varphi_\alpha\circ\zeta_k)|\right)(\O^{y_k}).
\end{equation}
In conclusion we obtain from \eqref{nosup} and \eqref{terza-1}
\begin{equation}\label{tre}
\liminf_{k\to\infty}\mathcal F_{\eps_k}^{\rm int}(y_k,\zeta_k)=\liminf_{k\to\infty} \int_{\Omega^{y_k}}\Big( \frac{\eps_k}2|\nabla 
	\z_k|^2+\frac 1{\eps_k} \Phi(\z_k)\Big)\,\d\xi\ge 
	\left(\bigvee_{\alpha=1}^m |\nabla(\varphi_\alpha\circ\zeta)|\right)(\Omega^y).
\end{equation}

In particular, $\varphi_\alpha\circ \zeta\in BV(\Omega^y)$ for any $\alpha\in\{1,\ldots,m\}$.  Since $\Phi(\zeta)=0$ a.e. in $\Omega^y$,  
 by invoking \cite[Proposition 2.2]{B} we get \eqref{uno} and 	\KKK
\[
\left(\bigvee_{\alpha=1}^m |\nabla(\varphi_\alpha\circ\zeta)|\right)(\Omega^y)=\frac12\sum_{\alpha,\beta=1}^m d_{\alpha,\beta}\mathcal H^{n-1}(E_{\alpha,\beta}^y).
\]
 Together with \eqref{tre}, this proves \eqref{due}.\KKK
\end{proof}
%

\section{Proof of the main results}\label{sec:main}
%
%

\UUU We are now in the position of providing a proof of
our main results, Theorems \ref{Prop:diffuse-ex}, \ref{th1}, and
\ref{th2}. \EEE

%
%

\begin{proofth0}
	Let  $(y_k,\z_k)_k\subset {\widetilde{\mathbb{Q}}}^R_{(y_0,\Gamma_0)}$ \KKK
	be a minimizing sequence  for functional  $\FF_\eps$, which is bounded from below due to \eqref{W-ass2}. 
	The coercivity of the potential $W$ from \eqref{W-ass2}  and the 
	generalized Friedrichs inequality imply that  one can extract
	a  not relabeled subsequence  such
	that $y_k\to y$ weakly  in $W^{1,p}(\O;\R^n)$.  The boundary condition
	is preserved in the
	limit. 
	We conclude \UUU by Lemma \ref{init} \EEE that $y\in \mathbb{Y}$ and $y=y_0$
	on $\Gamma_0$, \EM recalling that the assumption  on $y_0$ (not constant on $\Gamma_0$) prevents $y$ from being a constant map.
	\KKK  

	Denote by $\eta_k $ and $H_k$ the   zero  extensions on $\Rz^n$  of  $\z_k$ and $\nabla \z_k$ respectively.
	The coercivity  of $\FF^{\rm int}_\eps$   and the uniform bound $|\zeta_k|\le R$ \KKK imply that
	one can extract  not relabeled subsequences such that 
	$\eta_k\to \eta$ weakly* in $L^\infty(\R^n;\R^h\KKK)$   and $H_k\to H$
	weakly in $L^2(\R^{n};\R^{h\times h}\KKK)$.  Set now  $\z:=\eta|_{\O^y}$. 
		For every $\delta>0$, let $O_\delta:=\{\xi\in\O^y|\, {\rm
		dist}(\xi,\partial\O^y)>\delta\}\subset\subset\O^y$.   We have that  $\O^y=\cup_\delta O_\delta$
	and, by Lemma
	\ref{equicontinuouslemma}, $ O_\delta\subset \O^{y_k}$
	for  $k$  large. 
	For every $\xi_0\in O_\delta$ and $B(\xi_0,r)\subset O_\delta$ we
	have  that 
	$\eta_k\to \eta$ weakly in $W^{1,2}(B(\xi_0,r);\R^h\KKK)$. This implies
	that $H=\nabla\eta=\nabla\z $ almost everywhere in
	$B(\xi_0,r)$. Moreover,  by possibly extracting again, one
	has that   $\eta_k\to\eta $ strongly in
	$L^2(B(\xi_0,r);\R^h\KKK)$. 
	As every $\xi\in\O^y$ belongs to some $O_\delta$ for $\delta$
	small enough, we get that   $H=\nabla \z$ almost
	everywhere~in $\O^y$. Now, by the weak \MK lower semicontinuity \EOR of the $L^2$-norm
	\begin{align}
	\liminf_{k\to\infty}\int_{\O^{y_k}} |\nabla \z_k|^2\,\d\xi=\liminf_{k\to\infty}\int_{\R^n} |H_k|^2\,\d\xi\ge \int_{\R^n}  |H|^2\,\d\xi\ge\int_{\O^y} |\nabla \z|^2\,\d\xi.\label{prove}
	\end{align}

	The local strong convergence $\eta_k\to \eta$ in $L^2(B(\xi_0,r);\mathbb R^h\KKK)$ for any
	$B(\xi_0,r)\subset\subset\O^y$ and  $|\eta-\eta_k|\leq C$ imply the strong $L^2$-convergence on ${\O^y}$, hence, up to extracting again, the pointwise convergence to $\z$ on  $\O^y$,  and thus $|\zeta|\le R$. \KKK By the Fatou Lemma, we  find 
	\begin{align*}
\liminf_{k\to\infty}\int_{\O^{y_k}}  \Phi(\eta_k)\,\d\xi=\liminf_{k\to\infty}\int_{\R^n}   \Phi(\eta_k)\,\d\xi \ge \liminf_{k\to\infty}\int_{\O^y}   \Phi(\eta_k)\,\d\xi\ge\int_{\O^y}   \Phi(\z)\,\d\xi.\label{prove1}
	\end{align*}
	Thus we have proven the weak lower semicontinuity of the
	interfacial energy $\mathcal{F}^{\,\rm int}_\eps(y_k,\z_k)$.

	As for the bulk contribution, because of  the convergence  $\|\zeta-\zeta_k\|_{L^1(\O^{y_k}\cap \O^y)} \to 0$, we can apply Lemma \ref{lem:lsc-Fbulk} and obtain the lower semicontinuity of $\mathcal{F}^{\,\rm bulk}(y,\zeta) $.
    \EEE
	Together with \eqref{prove}, this proves  that $(y,\z)$ is
	a minimizer of $\FF_\eps$ on  $ {\widetilde{\mathbb{Q}}}^R_{(y_0,\Gamma_0)}$ \KKK by means of the direct method \cite{dacorogna}.    
\end{proofth0}

\begin{proofth1}
	Let $(y_k,\z_k)\in \bbQ_{(y_0,\Gamma_0)}$  be a minimizing sequence  for
	$\FF_0$.  As in  the proof of Theorem \ref{Prop:diffuse-ex}, we can assume, up to extraction of a  not
	relabeled  subsequence, that $y_k\to  y$ weakly in
	$W^{1,p} $  for some $y\in\mathbb Y$,  and the coercivity assumption \eqref{W-ass2} also implies that  $\det\nabla y_k$ are equiintegrable functions on $\Omega$. \KKK 
	
	Let  $F_k=(F_k^1,\ldots, F_k^m)$, with $F_k^\alpha=\{ \z_k=p_\alpha \}$, $\alpha=1,\ldots, m$, be the partition of $\O^{y_k}$ corresponding to a phase configuration $\zeta_k$; we can identify the sequence of states
	with the sequence $(y_k,F_k)_k$. Since  the interface energy  $$\sum_{\alpha,\beta=1}^m d_{\alpha,\beta}\mathcal H^{n-1}((F_k)_{\alpha,\beta}),$$
	where $(F_k)_{\alpha,\beta}:=\partial^*F_k^\alpha\cap \partial^*F_k^\beta\cap \O^{y_k}$,
	 is bounded along the
	sequence $(y_k,F_k)_k$, the sets  $F_k$ have uniformly  bounded
	perimeters,  namely,   $\per(F_k^\alpha,\O^{y_k})\le c $.

	For   $\ell\in \Nz$, let $O^{\ell}:=\{x\in\O^y|\, {\rm
		dist}(x,\partial\O^y)>2^{-\ell}\}\subset\subset\O^y$.  As
	$O^{\ell}\subset\O^{y_k}$ for $k$ large enough due to Lemma
	\ref{equicontinuouslemma}, for any given $\ell\in\Nz$  we have that $\limsup_k
	\per(F_k^\alpha,O^{\ell})\le c$ for any $\alpha$.  We can hence   find a measurable
	set $(G^\alpha)^{\ell}\subset  O^{\ell}$ and a  not relabeled  subsequence $F_{h}$ such that
	\[|(F_{h}^\alpha\Delta (G^\alpha)^{\ell})\cap O^{\ell}|\to 0\quad \textrm{for}\quad h\to\infty.
	\]
	For all  $\ell'>\ell$  we can  further  extract 
	a  subsequence $F_{h'}$ from  $F_{h}$ above  in
	such a way  that $|(F_{h'}^\alpha\Delta (G^\alpha)^{\ell'})\cap O^{\ell'}|\to 0 $
	and  $(G^\alpha)^{\ell'}\cap O^{\ell}=(G^\alpha)^{\ell} $.  From  the nested
	family of subsequences corresponding to $\ell=1,2,\ldots$  we
	extract by a diagonal argument a further subsequence $F_{k'}$. By
	setting     $F^\alpha:=\cup_\ell (G^\alpha)^{\ell}$  and,  owing to $
	O^{\ell}\nearrow\O^y$,  we get that  
	\[
	|(F_{k'}^\alpha\Delta F^\alpha)\cap \O^{y}|\to 0.
	\]
	Now, the sets $F^\alpha$ has finite perimeter in $\O^y$ as a consequence of
	Proposition  \ref{lsemicontinuity}.  By letting  $\z=\chi_F|_{\O^y} $  we
	then have that  $(y,\z)\in \mathbb Q_{(y_0,\Gamma_0)}$. \KKK 
	
	One is left to check that $\FF_0 (y,\z) \leq \liminf
	\FF_0(y_k,\z_k)$, which follows from the lower semicontinuity of
	$\FF_0$. Indeed, the lower semicontinuity of bulk part of $\FF_0$
	follows by the argument of  Lemma \ref{lem:lsc-Fbulk}. 
	 As concerns the interface term, the lower semicontinuity with respect to local convergence in measure is proven in \cite[Example 2.5]{AB}.
\end{proofth1}
%

%
In Proposition \ref{theo:liminf} a $\liminf$ inequality for the interface functional has been established. Combined with the lower semicontinuity of the bulk energy (Lemma \ref{lem:lsc-Fbulk}), we conclude that the whole energy functional  satisfies a $\Gamma-\liminf$ inequality w.r.t. the convergence notion of Lemma \ref{lem:lsc-Fbulk}. 
%
%
%
\MK Under the full Dirichlet conditions on the boundary of the domain, we shall prove Theorem \ref{th2} by using a  Modica-Mortola \cite{MM} recovery sequence deeply generalized by Baldo  in \cite{B}. \EOR 
 The $\Gamma$-convergence allows to prove  the convergence of the phase field solutions to the sharp interface solution.
\begin{proofth2} 
We first claim that, if $(y,{\z})\in \bbQ_{y_0}$, $\O^{y_0}\subset\R^n$ 
	being  a Lipschitz domain, and if we let $F=(F_1,\ldots,F_m)$ with $ F_\alpha=\{\xi\in\O^{y_0}:\z(\xi)=p_\alpha\} $, there exists a sequence $(\z_k)_k\subset W^{1,2}(\O^y;\mathbb R^h)$  such that $|\z_k|\le R$ for suitable $R>\max_{{\alpha\in\{1,\ldots m\}}}|p_\alpha|$ \EEE 
	and  such that 
	$$
	\lim_{k\to\infty}\|\z_k- {\z}\|_{L^1(\O^y)}=0 \quad\mbox{and}\quad \frac12\sum_{\alpha,\beta=1}^m d_{\alpha,\beta}\mathcal H^{n-1}(F_{\alpha,\beta})+\mathcal{F}^{\,\rm bulk}(y,\z)=\lim_{k\to\infty}\mathcal{F}_{\eps_k}(y,\z_k).	$$
	Indeed,
since the   $y$-component is a constant sequence, the claim completely  rests on the construction of the  recovery sequence $(\z_k)_k$ provided   by Baldo \cite{B}  (such a sequence is also satisfying $\int_{\Omega^y}\zeta_k(\xi)\,d\xi=\int_{\Omega^y}\zeta(\xi)\,d\xi$ for any $k\in\mathbb N$, thus justifying our observations in Remark \ref{lastremark}). In order to use this result, we need to assume the Lipschitz regularity of the deformed domain through the imposition of Dirichlet boundary conditions on the whole boundary of $\Omega$.  Moreover, by inspecting the construction of the recovery sequence from \cite[Section 3]{B}, we see that we can obtain a sequence $(\zeta_k)_k$ that is uniformly bounded, i.e., such that $|\zeta_k|\le R_0$ for large enough $R_0$ (only depending on the multiwell potential $\Phi$).  Then,
	since $\zeta_k\circ y\to \zeta\circ y$ in $L^1(\O;\mathbb R^h)$ follows by Lemma \ref{lem:1}, the convergence of the bulk part is obtained by dominated convergence 
	by
	means of assumptions \eqref{W-ass1} and \eqref{W-ass5}. \KKK
	The claim is proved. 

 The rest of the proof follows the one in \cite{GKMS}.
  Here we give a summary of it. 
 Let $k\in\mathbb N$, let $(y_k,\z_k)$ be a minimizer (provided by Theorem \ref{Prop:diffuse-ex}) for  $ \FF_{\eps_k} $ over $\widetilde{\mathbb Q}^R_{y_0}$, $R>R_0$,  and let $(y^*,\z^*)\in \bbQ_{y_0}$ be a state of finite energy for $\mathcal F_0$ whose recovery sequence is  $(y^*,\z_{k}^*)\subset\widetilde{ \bbQ}^R_{y_0}$. Using $\FF_{\e_k}(y_k,\z_k)\leq \FF_{\e_k}
 (y^*,\z_{k}^*) $ and the fact that $\FF^{}_{\e_k}(y^*,\z_{k}^*) \to  \FF^{}_0(y^*,\z^*)$ as $k\to\infty$, we conclude that $ \FF_{\e_k}(y_k,\z_k)\leq C$.
The coercivity 
 \eqref{W-ass2}  along with Friedrichs inequality ensures that 
 $y_k\to y$ weakly in $W^{1,p}(\O;\R^n)$ 
 for some not
 relabeled subsequence.  Moreover, $y\in\mathbb Y$ and $y=y_0$ on $\partial\Omega$ (hence, $\O^{y_k}=\O^y=\O^{y_0}$).
 The uniform
 bound on  $\mathcal{F}^{\rm
 	int}_{\eps_k}(y_k,\zeta_k)= \mathcal{F}^{\rm
 	int}_{\eps_k}(y,\zeta_k)$ also yields strong $L^1(\O^{y};\R^h)$ compactness for
 the sequence $\zeta_k$. This implies the existence of  $\zeta\in
 L^\infty(\Omega^y; \mathbb R^h)$,    such that $|\zeta|\le R$ and \KKK $\|\zeta_k-\zeta\|_{L^1(\O^y)}\to
 0$  for some  not relabeled subsequence. \KKK  By Proposition \ref{theo:liminf},  
 $\zeta$ is takes values in $P$ and 
 $$\FF^{\rm int}_0(y,\z) \leq \liminf_{k\to \infty} \FF^{\rm
 	int}_{\e_k}(y_k,\z_k).$$
 Now, we show that $(y,\z)$ is a minimizer
 $\FF_0$ on $\mathbb{Q}_{y_0}$.
 In fact, for any $(\tilde y,\tilde \z) \in
 \mathbb{Q}_{y_0}$, let $(\tilde y,\tilde
 \z_k)$ \MK be \EOR  its recovery sequence: $\FF_{\e_k}(\tilde y,\tilde
 \z_k) \to \FF_0(\tilde y, \tilde \z)$ as $k\to\infty$. By the lower semicontinuity
of the bulk term $\FF^{\rm bulk}$, 
 $$\FF_0(y,\z) \leq \liminf_{k\to \infty} \FF_{\e_k}(y_k,\z_k)\leq
 \liminf_{k\to \infty} \FF_{\e_k}(\tilde y,\tilde \z_k) = \FF_0(\tilde
 y, \tilde \z),$$
\UUU which proves the assertion. \EEE
\end{proofth2}

\section*{Acknowledgements}  
This research of M.K. was supported by the
FWF-GA\v{C}R project 19-29646L    and by the 
OeAD-M\v{S}MT  project   8J19AT013.
\EM E.M. acknowledges support from the MIUR-PRIN project No 2017TEXA3H. 
\KKK
U.S.\ \UUU is supported by Austrian Science Fund (FWF) projects
F\,65, W\,1245,  I\,4354, and P\,32788 and by the 
Vienna Science and Technology Fund (WWTF) project
MA14-009. \EEE

\end{document}